\documentclass[11pt, leqno]{amsart}
\usepackage{amsmath,amssymb,txfonts}
\usepackage{amssymb}
\usepackage{amsxtra}
\usepackage{amsthm, color}
\usepackage{txfonts}
\usepackage{graphicx}
\usepackage{times}
\usepackage{citeref}
\usepackage{tikz}
\usepackage{pgfplots}
\usepackage{tikz-3dplot}

\numberwithin{equation}{section}

\newtheorem{prop}{Proposition}[section]
\newtheorem{theorem}[prop]{Theorem}
\newtheorem{lemma}[prop]{Lemma}
\newtheorem{corollary}[prop]{Corollary}
\newtheorem{remark}[prop]{Remark}
\newtheorem{example}[prop]{Example}
\newtheorem{definition}[prop]{Definition}

\usepackage{tikz}
\usetikzlibrary{calc}

\makeatletter

\def\hyper@x#1,#2\relax{#1}
\def\hyper@y#1,#2\relax{#2}
\def\hyper@coords#1{#1}

\newif\ifhyper@vertical

\def\hyper@computer#1#2{%
	\edef\hyper@toscan{(#1)}
	\tikz@scan@one@point\hyper@coords\hyper@toscan
	\edef\hyper@sx{\the\pgf@x}
	\edef\hyper@sy{\the\pgf@y}
	\edef\hyper@toscan{(#2)}
	\tikz@scan@one@point\hyper@coords\hyper@toscan
	\edef\hyper@ex{\the\pgf@x}
	\edef\hyper@ey{\the\pgf@y}
	\pgfmathsetmacro{\hyper@mx}{(\hyper@ex + \hyper@sx)/2}
	\pgfmathsetmacro{\hyper@my}{(\hyper@ey + \hyper@sy)/2}
	\pgfmathsetmacro{\hyper@dx}{\hyper@ex - \hyper@sx}
	\pgfmathparse{\hyper@dx == 0 ? "\noexpand\hyper@verticaltrue" : "\noexpand\hyper@verticalfalse"}
	\pgfmathresult
	\ifhyper@vertical
	\edef\hyper@cmd{-- (\tikztotarget)}
	\else
	\pgfmathsetmacro{\hyper@dy}{\hyper@ey - \hyper@sy}
	\pgfmathsetmacro{\hyper@t}{\hyper@my/\hyper@dx}
	\pgfmathsetmacro{\hyper@cx}{\hyper@mx + \hyper@t * \hyper@dy}
	\pgfmathsetmacro{\hyper@radius}{veclen(\hyper@cx - \hyper@sx, \hyper@sy)}
	\pgfmathsetmacro{\hyper@sangle}{180 - atan2(\hyper@sy,\hyper@cx-\hyper@sx)}
	\pgfmathsetmacro{\hyper@eangle}{180 - atan2(\hyper@ey,\hyper@cx-\hyper@ex)}
	\edef\hyper@cmd{arc[radius=\hyper@radius pt, start angle=\hyper@sangle, end angle=\hyper@eangle]}
	\fi
}

\def\hyper@disc@computer#1#2{%
	\edef\hyper@toscan{(#1)}
	\tikz@scan@one@point\hyper@coords\hyper@toscan
	\edef\hyper@sx{\the\pgf@x}
	\edef\hyper@sy{\the\pgf@y}
	\edef\hyper@toscan{(#2)}
	\tikz@scan@one@point\hyper@coords\hyper@toscan
	\edef\hyper@ex{\the\pgf@x}
	\edef\hyper@ey{\the\pgf@y}
	\pgfmathsetmacro{\hyper@det}{\hyper@sx * \hyper@ey - \hyper@sy * \hyper@ex}
	\pgfmathparse{\hyper@det == 0 ? "\noexpand\hyper@verticaltrue" : "\noexpand\hyper@verticalfalse"}
	\pgfmathresult
	\ifhyper@vertical
	\edef\hyper@cmd{-- (\tikztotarget)}
	\else
	\pgfmathsetmacro{\hyper@mx}{(\hyper@ex + \hyper@sx)/2}
	\pgfmathsetmacro{\hyper@my}{(\hyper@ey + \hyper@sy)/2}
	\pgfmathsetmacro{\hyper@dx}{\hyper@ex - \hyper@sx}
	\pgfmathsetmacro{\hyper@dy}{\hyper@ey - \hyper@sy}
	\pgfmathsetmacro{\hyper@dradius}{\pgfkeysvalueof{/tikz/hyperbolic disc radius}}
	\pgfmathsetmacro{\hyper@t}{((\hyper@dradius)^2 - \hyper@sx * \hyper@ex - \hyper@sy * \hyper@ey)/(2 * (\hyper@sx * \hyper@ey - \hyper@sy * \hyper@ex))}
	\pgfmathsetmacro{\hyper@radius}{sqrt((\hyper@t)^2 + .25) * veclen(\hyper@dx,\hyper@dy)}
	\pgfmathsetmacro{\hyper@cx}{\hyper@mx + \hyper@t * \hyper@dy}
	\pgfmathsetmacro{\hyper@cy}{\hyper@my - \hyper@t * \hyper@dx}
	\pgfmathsetmacro{\hyper@sangle}{atan2(\hyper@sy-\hyper@cy,\hyper@sx - \hyper@cx)}
	\pgfmathsetmacro{\hyper@eangle}{atan2(\hyper@ey-\hyper@cy,\hyper@ex - \hyper@cx)}
	\pgfmathsetmacro{\hyper@eangle}{\hyper@eangle > \hyper@sangle + 180 ? \hyper@eangle - 360 : \hyper@eangle}
	\edef\hyper@cmd{arc[radius=\hyper@radius pt, start angle=\hyper@sangle, end angle=\hyper@eangle]}
	\fi
}

\def\hyper@plane@tangent#1#2{%
	\edef\hyper@toscan{(#1)}
	\tikz@scan@one@point\hyper@coords\hyper@toscan
	\edef\hyper@sx{\the\pgf@x}
	\edef\hyper@sy{\the\pgf@y}
	\edef\hyper@toscan{(#2)}
	\tikz@scan@one@point\hyper@coords\hyper@toscan
	\edef\hyper@ex{\the\pgf@x}
	\edef\hyper@ey{\the\pgf@y}
	\pgfmathsetmacro{\hyper@ex}{\hyper@ex - \hyper@sx}
	\pgfmathsetmacro{\hyper@ey}{\hyper@ey - \hyper@sy}
	\pgfmathparse{\hyper@ex == 0 ? "\noexpand\hyper@verticaltrue" : "\noexpand\hyper@verticalfalse"}
	\pgfmathresult
	\ifhyper@vertical
	\pgfmathsetmacro{\hyper@d}{\hyper@ey/1cm}
	\pgfmathsetmacro{\hyper@radius}{\hyper@sy * exp(\hyper@d) - \hyper@sy}
	\edef\hyper@cmd{-- ++(0,\hyper@radius pt)}
	\else
	\pgfmathsetmacro{\hyper@d}{\hyper@ex > 0 ? veclen(\hyper@ex,\hyper@ey) : -veclen(\hyper@ex,\hyper@ey)}
	\pgfmathsetmacro{\hyper@radius}{abs(\hyper@sy * \hyper@d / \hyper@ex)}
	\pgfmathsetmacro{\hyper@sangle}{90 + atan(\hyper@ey/\hyper@ex)}
	\pgfkeysgetvalue{/tikz/hyperbolic plane target angle}{\hyper@eangle}
	\ifx\hyper@eangle\pgfutil@empty
	\pgfmathsetmacro{\hyper@d}{\hyper@d/1cm}
	\pgfmathsetmacro{\hyper@ey}{\hyper@ey/1cm}
	\pgfmathsetmacro{\hyper@tanhd}{tanh(\hyper@d)}
	\pgfmathsetmacro{\hyper@eangle}{acos((\hyper@d * \hyper@tanhd - \hyper@ey)/(\hyper@d - \hyper@ey * \hyper@tanhd))}
	\fi
	\edef\hyper@cmd{arc[radius=\hyper@radius pt, start angle=\hyper@sangle, end angle=\hyper@eangle]}
	\fi
}

\tikzset{%
	hyperbolic disc radius/.initial={1cm},
	hyperbolic plane/.style={
		to path={
			\pgfextra{\hyper@computer\tikztostart\tikztotarget}
			\hyper@cmd
		}
	},
	hyperbolic plane tangent/.style={
		to path={
			\pgfextra{\hyper@plane@tangent\tikztostart\tikztotarget}
			\hyper@cmd
		}
	},
	hyperbolic disc/.style={
		to path={
			\pgfextra{\hyper@disc@computer\tikztostart\tikztotarget}
			\hyper@cmd
		}
	},
	hyperbolic plane target angle/.initial={},
}

\makeatother
\begin{document}
	
	\title[\tiny Capacities-masses for Laplace-Beltrami operators on complete Riemannian manifolds]{Capacities-to-masses for Laplace-Beltrami operators on complete Riemannian manifolds}
	
\author{\tiny Xiaoshang Jin}
\address{School of Mathematics and Statistics, Huazhong University of Science and Technology, Wuhan, Hubei 430074, China}
\email{jinxs@hust.edu.cn}
\author{\tiny Jie Xiao}
\address{Department of Mathematics \& Statistics,
	Memorial University, St. John's, NL A1C 5S7, Canada}
\email{jxiao@math.mun.ca}

\keywords{}

\subjclass[2020]{}

\begin{abstract}
This paper presents an innovative approach to the capacities-to-masses for the Laplace-Beltrami operators on the complete Riemannian manifolds, unexpectedly solving the open problem posed within \cite[Remark 2.16]{GPS} on the limiting variational capacity.
\end{abstract}

\thanks{The first-named and second-named authors were supported by {NNSF of China
		\# 12201225} and NSERC of Canada \# 202979 respectively.}

\subjclass[2010]{31B15, 35B45, 49Q10, 53C21, 74G65}
	\date{}

\date{}


\maketitle
\tableofcontents
\section{Introduction}\label{s1}

If $(\mathbb M^n,\mathsf{g}=(\mathsf{g}_{ij}))$ is any $2\le n$-dimensional complete Riemannian manifold with not only the metric $\mathsf{g}$ but also the pair of scalar curvature and Laplacian operator pair
$$\left\{{\mathsf{R_g}},\ \Delta_{\mathsf{g}}=\frac{1}{\sqrt{|\text{det}\mathsf{g}|}}\frac{\partial}{\partial x_j}\left(\sqrt{|\text{det}{\mathsf{g}}|}{\mathsf{g}}^{ij}\frac{\partial}{\partial x^i}\right)\right\}\ \ \text{with}\ \ (\mathsf{g}^{ij})=(\mathsf{g}_{ij})^{-1},
$$
then for any {general smooth function $u$ on $\mathbb M^2$ or any positive smooth function $u$ on $\mathbb M^{n\geq 3}$} (simply written as $u\in C^\infty(\mathbb M^n)$) the conformally related metric
$$
\tilde{\mathsf{g}}=\begin{cases} e^{2u}\mathsf{g}&\ \ \text{for}\ \ n=2;\\
u^\frac{4}{n-2}\mathsf{g}&\ \ \text{for}\ \ n>3,
\end{cases}
\text{with scalar curvature}\ \ \mathsf{R}_{\tilde{\mathsf{g}}}
$$
enjoys
not only the partial differential equation (cf. \cite{LP, MS})
$$
\Delta_{\mathsf{g}} u=\begin{cases}
-2^{-1}\big(e^{2u}\mathsf{R}_{\tilde{\mathsf{g}}}-{\mathsf{R_g}}\big)&\text{for}\ \ n=2;\\
\left(\frac{2-n}{4(n-1)}\right)\Big(u^\frac{n+2}{n-2}\mathsf{R}_{\tilde{\mathsf{g}}}-{u}{\mathsf{R_g}}\Big)&\text{for}\ \ n\ge 3.
\end{cases}
$$
{but also the conformal transformation formula
$$
\Delta_{\tilde{\mathsf{g}}} f=\begin{cases}
e^{2u}\Delta_{\mathsf{g}} f\ &\text{for}\ \ n=2;\\
u^{-\frac{n+2}{n-2}}\Big(\Delta_{\mathsf{g}} (fu)-f\Delta_{\mathsf{g}} u\Big) &\text{for}\ \ n\ge 3.
\end{cases}
$$

We are just motived by \cite[4.2]{He} to pay attention to the following $(0,\infty)\ni p$-energy
\begin{equation}
\label{11}
\mathsf{E}(p,u,\mathsf{g})=\int_{\mathbb M^n}|\Delta_{\mathsf{g}} u|^p\,d\upsilon_{\mathsf{g}}.
\end{equation}
\begin{itemize}
	\item
On the one hand, of particular interest is that if
$$
{\mathsf{R_g}}=0\ \text{or}\ {\mathsf{R_{\tilde{g}}}}=0
$$
then
\begin{equation}
\label{12}
\mathsf{E}(p,u,\mathsf{g})=\begin{cases} \int_{\mathbb M^2}\big|2^{-1}e^{2u}\mathsf{R}_{\tilde{\mathsf{g}}}\big|^p e^{-2u}\,d\upsilon_{\tilde{\mathsf{g}}}&\text{for}\ \ n=2;\\
 \int_{\mathbb M^n}\left|\Big(\frac{2-n}{4(n-1)}\Big)\Big(u^\frac{n+2}{n-2}\mathsf{R}_{\tilde{\mathsf{g}}}\Big)\right|^p
u^{\frac{-2n}{n-2}}\,d\upsilon_{\tilde{\mathsf{g}}}&\text{for}\ \ n\ge 3,
\end{cases}
\end{equation}
or
\begin{equation}
\label{13}
\mathsf{E}(p,u,\mathsf{g})=\begin{cases}\int_{\mathbb M^2}\big|2^{-1}\mathsf{R}_g\big|^p\,d\upsilon_{\mathsf{g}}&\text{for}\ \ n=2;\\
 \int_{\mathbb M^n}\left|\Big(\frac{2-n}{4(n-1)}\Big)\mathsf{R}_g\right|^p u^p\,d\upsilon_{\mathsf{g}}&\text{for}\ \ n\ge 3.
\end{cases}
\end{equation}

\item On the other hand, if
$$
\text{$n=2$ or $\Delta_g u=0$ in the case of $n\geq 3$},
$$
then

\begin{equation}
\label{14}
\mathsf{E}(p,f,\tilde{\mathsf{g}})=\begin{cases} \int_{\mathbb M^2}|e^{2u}\Delta_{\mathsf{g}} f|^p e^{-2u} \,d\upsilon_{\mathsf{g}} &\text{for}\ \ n=2;\\
\int_{\mathbb M^n}\left|u^{-\frac{n+2}{n-2}} \Delta_{\mathsf{g}} (fu)\right|^p
u^{\frac{2n}{n-2}}\,d\upsilon_{\mathsf{g}} &\text{for}\ \ n\ge 3,
\end{cases}
\end{equation}
and hence
$$\begin{cases}
\mathsf{E}(1,f,\tilde{\mathsf{g}})=E(1,f,\mathsf{g}) \ &\text{for}\ \ n=2;\\
 \mathsf{E}(\frac{2n}{n+2},f,\tilde{\mathsf{g}})=\mathsf{E}(\frac{2n}{n+2},fu,\mathsf{g})\ &\text{for}\ n\geq 3.
\end{cases}$$
\end{itemize}

In summary, \eqref{11} is related to not only the integral of the scalar curvature but also the conformal invariants under some specific conditions of
$\big\{p, n, (\mathbb M^n,\mathsf{g})\big\}.$ Needless to say, it is also tied to the second-order Adams-Moser-Trudinger-type inequality
at least in the Euclidean spaces; see e.g. \cite{AX}.
}

Accordingly, it is a very natural thing to introduce the following new concept.

    \begin{definition}
    Given $0<p<\infty,$ for any compact subset $K$ of a bounded open subdomain $\Omega$ of $(\mathbb M^n,\mathsf{g})$, the $(2,p)-$capacity of the so-called condenser $(K,\Omega)$ is defined by
    \begin{align}\label{15}
    {\rm cap}_{2,p}(K,\Omega)&=\inf\Bigg\{\int_{\Omega}|\Delta f|^p\,d\upsilon_{\mathsf{g}}:\ f\in C_0^\infty(\Omega)\ \&\ f=1 \ \text{in a neighbourhood of} \ K  \Bigg\}
    \\ & = \inf\Bigg\{\int_{\Omega}|\Delta f|^p\,d\upsilon_{\mathsf{g}}:\ f\in W_0^{2,p}(\Omega)\ \&\ f=1 \ \text{in a neighbourhood of} \ K \Bigg\},\notag
    \end{align}
    where
    $$
    \begin{cases}
    \Delta=\Delta_{\mathsf{g}};\\
    W^{2,p}_0(\Omega)={\overline{C_0^\infty(\Omega)}}^{W^{2,p}(\Omega)};\\
    W^{2,p}(\Omega)=\Big\{f:\ \ \text{$f$'s partial derivatives up to the second order are in $L^p(\Omega)$}\Big\}.
    \end{cases}
    $$

 \end{definition}

    Whenever $K$ and $\Omega$ are smooth domains in $(\mathbb M^n,\mathsf{g})$, we can readily obtain
    \begin{equation}
    \label{16}
    {\rm cap}_{2,p}(K,\Omega)=\inf\Bigg\{\int_{\Omega\setminus K}|\Delta f|^p\,d\upsilon_{\mathsf{g}}:\ f\in \mathfrak{C}^{1,1}(\Omega\setminus K)\Bigg\},
    \end{equation}
    where
    \begin{equation}
    \label{17}
    \begin{cases}\mathfrak{C}^{1,1}(\Omega\setminus K)=\Bigg\{f\in C^{1,1}(\overline{\Omega\setminus K}):\ f\big|_{\partial K}=1\ \ \&\ \ f\big|_{\partial\Omega}=0=|\nabla f|\big|_{\partial (\Omega\setminus K)}\Bigg\};\\
    \nabla\phi=\nabla_g\phi=g^{ij}\frac{\partial\phi}{\partial x_i}\frac{\partial}{\partial x_j}=\text{the gradient operator on}\ C^\infty(\mathbb M^n).
    \end{cases}
    \end{equation}

   {
    Given not only $p\in (1,\infty)$ but also a smooth condenser $(K,\Omega)$ in $(\mathbb M^n,\mathsf{g})$, in order to find the minimum of the $p-$energy
    $$
   \mathsf{E}_p(u)=\int_{\Omega\setminus K}|\Delta u|^p\,d\upsilon_{\mathsf{g}}\ \ \forall\ \  u\in \mathfrak{X}= \mathfrak{C}^{1,1}(\Omega\setminus K)+ W^{2,p}_0(\Omega\setminus K),
    $$
    we consider the following Euler-Lagrange-Dirichlet problem:
   	\begin{equation}
   	\label{18}
   	\begin{cases}
   	\Delta(|\Delta u|^{p-2} \Delta u)=0 \ & \text{in}\ \ \Omega\setminus K;
   	\\ u=1 & \text{on}\ \ \partial K;
   	\\ u=0 & \text{on}\ \ \partial \Omega;
   	\\ |\nabla u|=0 & \text{on}\ \ \partial K\ \&\ \ \partial \Omega,
   	\end{cases}
   	\end{equation}
   thereby establishing the following initial result.

   \begin{theorem}\label{t21} Let not only $p\in (0,\infty)$ but also $(K,\Omega)$ be a smooth condenser in $(\mathbb M^n,\mathsf{g}).$
   	\begin{itemize}
   		\item [\rm (i)]
   If $p\in (1,\infty)$, then not only there exists a unique weak solution $u\in \mathfrak{X}$ to \eqref{18} in the sense that
   \begin{equation}
   	\label{19}
    \int_{\Omega\setminus K}|\Delta u|^{p-2} \Delta u\Delta v\,d\upsilon_{\mathsf{g}}=0\ \ \forall\ \ v\in W^{2,p}_0(\Omega\setminus K)
    \end{equation}
   	but also $u$ is the minimum (said to be the $(2,p)-$potential of $\Omega\setminus K$) of the $p-$energy $\mathsf{E}_p(\cdot)$ such that
    \begin{equation}
   	\label{110}
   	|\Delta u|>0\ \ \text{a.e. on}\ \ \Omega\setminus K\Longrightarrow
   	{\rm cap}_{2,p}(K,\Omega) = \int_{\Omega\setminus K}|\Delta u|^p\,d\upsilon_{\mathsf{g}}.
   	\end{equation}
   \item[\rm (ii)] If $p\in (0,1]$, then there is the following formula
   \begin{equation}
   \label{111}
{\rm cap}_{2,p}(K,\Omega)=\begin{cases} 2{\rm cap}_2(K,\Omega)=2\underset{\mathrm{1}_K\le f\in C_0^\infty(\Omega)}{\inf}\int_{\Omega}|\nabla f|^2\,d\upsilon_{\mathsf{g}} &\ \ \text{for}\ \ p=1;\\
0&\ \ \text{for}\ \ p\in (0,1).
\end{cases}
\end{equation}

\end{itemize}
   \end{theorem}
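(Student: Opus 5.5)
For part (i) I would use the direct method of the calculus of variations on the affine space $\mathfrak{X}=u_0+W^{2,p}_0(\Omega\setminus K)$, where $u_0\in\mathfrak{C}^{1,1}(\Omega\setminus K)$ is a fixed smooth function equal to $1$ near $\partial K$ and $0$ near $\partial\Omega$; such a $u_0$ exists because the condenser is smooth. The functional $\mathsf{E}_p$ is bounded below by $0$. Take a minimizing sequence $u_k=u_0+v_k$. The key coercivity step is the elliptic estimate $\|v\|_{W^{2,p}}\le C\|\Delta v\|_{L^p}$ for $v\in W^{2,p}_0(\Omega\setminus K)$. This follows from Calder\'on--Zygmund $L^p$ theory for the Dirichlet Laplacian on the smooth bounded domain $\Omega\setminus K$, together with injectivity: $\Delta v=0$ with $v=0$ on the boundary forces $v=0$. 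Hence $(v_k)$ is bounded in the reflexive space $W^{2,p}_0$ for $1<p<\infty$. A subsequence converges weakly, $\Delta u_k\rightharpoonup\Delta u$ in $L^p$, and weak lower semicontinuity of the $L^p$ norm gives a minimizer $u\in\mathfrak{X}$.

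Uniqueness comes from strict convexity of $t\mapsto|t|^p$. If $u_1,u_2$ are both minimizers, then the midpoint has strictly smaller energy unless $\Delta u_1=\Delta u_2$ a.e. In that case $\Delta(u_1-u_2)=0$ with $u_1-u_2\in W^{2,p}_0$, so $u_1=u_2$. The weak equation \eqref{19} is the first variation: differentiate $\varepsilon\mapsto\mathsf{E}_p(u+\varepsilon v)$ at $\varepsilon=0$, which is justified by dominated convergence since $\big||a+\varepsilon b|^p-|a|^p\big|/\varepsilon\le p(|a|+|b|)^{p-1}|b|\in L^1$. Conversely, by convexity any weak solution of \eqref{19} is a minimizer, so it is unique. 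To identify the minimum with ${\rm cap}_{2,p}$ I would use \eqref{16}: the infimum over $\mathfrak{C}^{1,1}$ equals the infimum over $\mathfrak{X}$ by density of smooth functions in $W^{2,p}_0$ and continuity of $\mathsf{E}_p$ in $W^{2,p}$. I would also check that extending $u$ by $1$ on $K$ and by $0$ outside $\Omega$ gives an admissible competitor in the $W^{2,p}_0(\Omega)$ version of \eqref{15}; this is where the vanishing gradient on the boundary is needed. The hypothesis $|\Delta u|>0$ a.e. is not needed for this equality. I would only remark that it makes $|\Delta u|^{p-2}\Delta u$ well defined when $p<2$.

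For part (ii) with $p=1$, I would first prove the lower bound ${\rm cap}_{2,1}\ge 2{\rm cap}_2$. For admissible $f$ (smooth, compactly supported, $f=1$ near $K$), set $\phi=f^2$ near the relevant region; more carefully, integrate by parts to get $\int_\Omega|\nabla f|^2=-\int_\Omega f\Delta f$. Then apply a layer-cake/coarea argument. Writing $\int|\Delta f|\ge\int_{\{f>t\}}(-\Delta f)=\int_{\{f=t\}}|\nabla f|$ for regular values $t\in(0,1)$, and integrating over $t$, gives $\int|\Delta f|\ge \int_{\{f=t\}}|\nabla f|$. Here $\int_{\{f=t\}}|\nabla f|\,d\sigma$ is the flux. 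The better route is to use both superlevel sets $\{f>t\}$ and sublevel sets in $\Omega$, whose fluxes cancel in sign, giving $\int|\Delta f|\ge 2\int_{\{f=t\}}|\nabla f|$. Averaging over $t\in(0,1)$ with the coarea formula gives $2\int_{\{0<f<1\}}|\nabla f|^2\ge 2{\rm cap}_2$. For the upper bound, take the capacitary potential $w$ with $\Delta w=0$ in $\Omega\setminus K$, $w=1$ on $\partial K$, $w=0$ on $\partial\Omega$, and mollify it so that its Laplacian concentrates as measures on $\partial K$ and $\partial\Omega$, each of total mass $\int|\nabla w|^2={\rm cap}_2$. The $L^1$ norm of the Laplacian of the smoothing then tends to $2{\rm cap}_2$. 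For $p<1$, I would exploit non-convexity: concentrate the same total Laplacian mass on thin shells of width $\delta$, so that $\int|\Delta f|^p\approx \delta^{1-p}\cdot O(1)\to0$. Concretely, make $f$ transition with a profile whose second derivative has size $\delta^{-1}$ on a set of measure $O(\delta)$.

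I expect the main obstacle to be the $p=1$ lower bound, namely obtaining the sharp factor $2$ rigorously. The upper bound's smoothing of $w$ near $\partial K$ with vanishing gradient also needs care.
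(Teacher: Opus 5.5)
Your part (i) follows the paper's route: coercivity from the $W^{2,p}$ estimate on $\Omega\setminus K$, then strict convexity and weak lower semicontinuity. It is more complete than the paper on two points. First, convexity makes every weak solution of \eqref{19} a minimiser, so uniqueness of weak solutions follows, not just uniqueness of minimisers. Second, the identity ${\rm cap}_{2,p}(K,\Omega)=\int_{\Omega\setminus K}|\Delta u|^p\,d\upsilon_{\mathsf{g}}$ does hold without the hypothesis $|\Delta u|>0$ a.e.

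The real difference is your lower bound ${\rm cap}_{2,1}\ge 2{\rm cap}_2$. The paper argues by duality (Lemma \ref{l41}, Theorem \ref{t42}). For admissible $v$ and the harmonic function $h=u-\frac12$, with $u$ the capacitary potential, Green's identity and the boundary conditions give ${\rm cap}_2(K,\Omega)=\big|\int_{\Omega\setminus K}h\,\Delta v\,d\upsilon_{\mathsf{g}}\big|\le\frac12\int_{\Omega\setminus K}|\Delta v|\,d\upsilon_{\mathsf{g}}$.

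You work on the test function instead. For a regular value $t\in(0,1)$ one has $\int_{\{f>t\}}\Delta f\,d\upsilon_{\mathsf{g}}=-\int_{\{f=t\}}|\nabla f|\,d\sigma_{\mathsf{g}}$, while $\int_\Omega\Delta f\,d\upsilon_{\mathsf{g}}=0$. Hence $\int_\Omega|\Delta f|\,d\upsilon_{\mathsf{g}}\ge 2\int_{\{f=t\}}|\nabla f|\,d\sigma_{\mathsf{g}}$, and averaging in $t$ with the coarea formula gives $2\int_{\{0<f<1\}}|\nabla f|^2\,d\upsilon_{\mathsf{g}}$. This is rigorous and more elementary than the paper's route, since it needs no regularity of the potential and no boundary terms. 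You must, however, say explicitly that $\min\{\max\{f,0\},1\}$ is admissible in the $W^{1,2}_0$ form of \eqref{30}. Admissible $f$ need not take values in $[0,1]$, which is also why you should discard your preliminary route via $\int|\nabla f|^2=-\int f\Delta f$. So the lower bound is not the obstacle you expected. What the paper's duality buys is that it is the endpoint of an estimate valid for every $p\ge1$, namely \eqref{400}, together with its equality case.

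The step that does need work is the upper bound. Mollifying the extended capacitary potential destroys admissibility: the result is no longer $1$ on $\partial K$, let alone near $K$, and its support leaks out of $\Omega$. On a manifold, mollification is also not canonical. You could repair this with a slightly larger $K'$, a slightly smaller $\Omega'$, chart-wise mollification below the gap, and continuity of ${\rm cap}_2$.

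The paper's device is cleaner. Take $w=G(u)$ with $G(0)=0$, $G(1)=1$, $G'(0)=G'(1)=0$ and $G''$ supported in $[0,\delta]\cup[1-\delta,1]$. Since $\Delta u=0$, one has $\Delta w=G''(u)|\nabla u|^2$. The coarea formula with the constant flux $\int_{\{u=t\}}|\nabla u|\,d\sigma_{\mathsf{g}}={\rm cap}_2(K,\Omega)$ then yields exactly $\int_{\Omega\setminus K}|\Delta w|\,d\upsilon_{\mathsf{g}}={\rm cap}_2(K,\Omega)\int_0^1|G''(t)|\,dt$, which can be made at most $2(1+\varepsilon){\rm cap}_2(K,\Omega)$. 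This is your ``concentration on thin shells'' made intrinsic. It also settles $p\in(0,1)$ at once: by Jensen, $\int|\Delta w|^p\le\big|\{u\in[0,\delta]\cup[1-\delta,1]\}\big|^{1-p}\big(\int|\Delta w|\big)^p\to0$ as $\delta\to0$.
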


   \begin{proof} (i) If $p\in (1,\infty)$, then
   	upon fixing a function
   	$$
   	f\in \mathfrak{C}^{1,1}(\Omega\setminus K),
   	$$
   	we have
   $$
   \forall \ u\in\mathfrak{X} \Rightarrow f-u\in W^{2,p}_0(\Omega\setminus K).
   $$
   According to the classic $W^{2,p}$ theory in PDE, there is a positive constant pair $\{C_1,C_2\}$ depending on $p,f$ \& $\Omega\setminus K$ such that
   $$
   \begin{cases}\|f-u\|^p_{2,p}\leq C_1 \mathsf{E}_p(f-u);\\
   \|u\|^p_{2,p}\leq C_1 \mathsf{E}_p(f-u)+\|f\|^p_{2,p}\leq C_2 \big(\mathsf{E}_p(u)+1\big).
   \end{cases}
   $$
   Hence $\mathsf{E}(\cdot)$ is coercive over $\mathfrak{X}.$ Furthermore,
   for any $u_1,u_2\in\mathfrak{X},$  we use $p\in (1,\infty)$ to estimate
   \begin{align*}
   \mathsf{E}_p\big(2^{-1}({u_1+u_2})\big)&=\int_{\Omega\setminus K}\Big|2^{-1}{\Delta u_1}+2^{-1}{\Delta u_2}\Big|^p\,d\upsilon_{\mathsf{g}}\\
   &\leq
   \int_{\Omega\setminus K}\Big({2^{-1}|\Delta u_1|^p}+2^{-1} {|\Delta u_2|^p}\Big)\,d\upsilon_{\mathsf{g}}\\
   &=2^{-1}\big(\mathsf{E}_p(u_1)+\mathsf{E}_p(u_2)\big),
   \end{align*}
   with its inequality becoming an equality iff
   $$\Delta u_1=\Delta u_2\ \ \text{a.e. in}\ \Omega\setminus K.
   $$
   This equality circumstance, along with the given boundary condition, ensures $u_1=u_2$. Hence $\mathsf{E}_p(\cdot)$ is not only strictly convex but also weakly lower semi-continuous over $\mathfrak{X}$ - in other words - there exists a unique
   minimiser $u$ for $\mathsf{E}_p(\cdot)$ which is also the weak solution to \eqref{18}. {Moreover, since $|\Delta u|^{p-2} \Delta u$ is harmonic
   in $\Omega\setminus K,$ we obtain that $|\Delta u|>0$ holds almost everywhere (a.e.) due to the boundary conditions.}

(ii) The formula \eqref{111} will be verified within Theorem \ref{t22} whose case $p=1$ is actually a solution to the open problem posed in \cite[Remark 2.16]{GPS}.
    \end{proof}

   Most importantly, we are led by not only Theorem \ref{t21} but also \eqref{12}-\eqref{13}-\eqref{14} to an intensive case study of
   $$
  {\rm cap}_{2,p}(K,\Omega)\ \&\  {\rm cap}_{2,p}(K)\ \text{connecting to $[1,\infty)]\ni p$-ADM mass $\mathsf{M}_{\mathrm{ADM}_p, \dagger/ \ddagger}(\mathbb M^n,\mathsf{g})$};
   $$
    see the forthcoming consecutive four sections: \S\ref{s3}-\S\ref{s4}-\S\ref{s5}-\S\ref{s6} for the details.
    
\begin{remark}
Given a condenser $(K,\Omega)$ in  $(\mathbb M^n,\mathsf{g})$ and $p>0,$ we can define another $(2,p)-$type capacity by
 \begin{align}
    {\rm C}_{2,p}(K,\Omega)&=\inf\Bigg\{\int_{\Omega}|\Delta f|^p\,d\upsilon_{\mathsf{g}}:\ 1_K\leq f\in C_0^\infty(\Omega) \Bigg\}
    \\ & = \inf\Bigg\{\int_{\Omega}|\Delta f|^p\,d\upsilon_{\mathsf{g}}:\ 1_K\leq f\in W_0^{2,p}(\Omega)\Bigg\}.\notag
    \end{align}
\end{remark}
If $K$ and $\Omega$ are smooth domains, then
 {\small $$
{\rm C}_{2,p}(K,\Omega)=\inf\Bigg\{\int_{\Omega\setminus K}|\Delta f|^p\,d\upsilon_{\mathsf{g}}:\ f\in C^{1,1}(\overline{\Omega\setminus K})\ \&\ f\big|_{\partial K}-1= f\big|_{\partial\Omega}=0=|\nabla f|\big|_{\partial \Omega}\Bigg\}.
$$}
Directly, it follows from the definition that
$${\rm C}_{2,p}(K,\Omega)\leq{\rm cap}_{2,p}(K,\Omega).$$
However, these two capacities are different. In particular, as pointed out by Y. Sun, if $(K,\Omega)$ is such an origin-centered Euclidean round ring that 
$$K=\overline{B(o,\tfrac12)}\subseteq B(o,1)=\Omega\ \ \text{in}\ \ \mathbb{R}^3
$$ 
then 
$$
{\rm C}_{2,2}(K,\Omega)=24\pi<224\pi={\rm cap}_{2,2}(K,\Omega),
$$
whose calculations are omitted here for brevity. In what follows, we will focus on ${\rm cap}_{2,p}(K,\Omega)$ although ${\rm C}_{2,p}(K,\Omega)$ is also interesting.

\section{Capacity-volume estimates}\label{s3}

    In this section, we are driven by \cite[Theorems I.1-II.1]{JX} for the well-known functional/variational capacity
    \begin{equation}
    \label{30}
    [1,\infty)\ni p\mapsto   {\rm cap}_{p}(K,\Omega)=\underset{\mathrm{1}_K\le f\in C_0^\infty(\Omega)}{\inf}\int_{\Omega}|\nabla f|^p\,d\upsilon_{\mathsf{g}}=\underset{\mathrm{1}_K\le f\in W_0^{1,p}(\Omega)}{\inf}\int_{\Omega}|\nabla f|^p\,d\upsilon_{\mathsf{g}}
    \end{equation}
    which relys on
    $$
    \begin{cases}
    W^{1,p}_0(\Omega)={\overline{C_0^\infty(\Omega)}}^{W^{1,p}(\Omega)};\\
    W^{1,p}(\Omega)=\Big\{f:\ \ \text{$f$'s partial derivatives up to the first order are in $L^p(\Omega)$}\Big\},
    \end{cases}
    $$
     to finger out the following new capacity-volume estimation of a smooth condenser in $(\mathbb M^n,\mathsf{g})$.
    \begin{theorem}\label{t41}
	For $n\ge 3$ \& $\sigma_{n-1}$ - the surface-area of the unit sphere $\mathbb S^{n-1}$ of $\mathbb R^n$, suppose that $(K,\Omega)$ is a
     smooth condenser in $(\mathbb M^n,\mathsf{g})$ with volume pair $\big\{|K|,|\Omega|\big\}$. Then
	$$
    \begin{cases} \inf\limits_{K\Subset\Omega}|K|^{-\alpha}{{\rm cap}_{2,p}(K,\Omega)}=0\ \ \forall\ \ \alpha\in\Big(0,1-\frac{2p}{n}\Big)&\ \ \text{as}\ \ 1<p\not=\frac{n}{2};\\
    \inf\limits_{K\Subset \Omega}|K|^{-1}{\exp\left(-\beta \big({\rm cap}_{2,\frac n2}(K,\Omega)\big)^{\frac{2}{2-n}}\right)}=0\ \ \forall\ \
	\beta\in\bigg(0,\frac{n(\sigma_{n-1})^\frac{2}{n-2}}{(n-2)^\frac{n}{2-n}}\bigg)&\ \ \text{as}\ \ p=\frac{n}{2}.
	\end{cases}
	$$
    \end{theorem}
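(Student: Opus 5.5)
The plan is to test the infimum on concentric geodesic balls and to control ${\rm cap}_{2,p}$ from above by an explicit radial test function in \eqref{15}. Fix $x_0\in\Omega$ and $R_0>0$ smaller than the injectivity radius at $x_0$ with $B(x_0,2R_0)\Subset\Omega$. In normal coordinates on $B(x_0,2R_0)$ we have $\mathsf g_{ij}=\delta_{ij}+O(\rho^2)$ and $d\upsilon_{\mathsf g}=(1+O(\rho^2))\,dy$, where $\rho=d(x_0,\cdot)$. For a radial function $f=\phi(\rho)$ this gives
$$
\Delta f=\phi''(\rho)+\Big(\tfrac{n-1}{\rho}+O(\rho)\Big)\phi'(\rho).
$$
So on $B(x_0,2R_0)$ every radial computation agrees with the Euclidean one up to a factor $1+O(R_0^2)$. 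We take $K=K_r=\overline{B(x_0,r)}$ with $r\to0$, so that $|K_r|=\frac{\sigma_{n-1}}{n}r^n(1+O(r^2))$.

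\emph{Case $1<p\neq\frac n2$.} If $p>\frac n2$ the interval $(0,1-\frac{2p}{n})$ is empty and there is nothing to prove, so assume $1<p<\frac n2$. Fix $\phi\in C^\infty(\mathbb R)$ with $\phi=1$ on $(-\infty,1]$ and $\phi=0$ on $[2,\infty)$, and set $f_r=\phi(\rho/r)$. Then $f_r\in C_0^\infty(\Omega)$, $f_r=1$ near $K_r$, and $|\Delta f_r|\le Cr^{-2}$ on the annulus $r\le\rho\le2r$, with $C$ independent of $r$. Plugging $f_r$ into \eqref{15} gives ${\rm cap}_{2,p}(K_r,\Omega)\le C r^{n-2p}$. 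Hence
$$
|K_r|^{-\alpha}{\rm cap}_{2,p}(K_r,\Omega)\le C r^{\,n-2p-\alpha n}\to0 \quad (r\to0),
$$
exactly when $\alpha<1-\frac{2p}{n}$. This is the first line of the theorem.

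\emph{Case $p=\frac n2$.} Here the scaling test function only gives a bounded energy, so we replace it with a logarithmic cutoff. With $t=\log(R_0/\rho)/\log(R_0/r)$, set $f_r=\psi(t)$, where $\psi$ is smooth, equals $1$ near $t\ge1$ and $0$ near $t\le0$, and is chosen to minimise the one-dimensional model energy. Since $\Delta\log\rho=(n-2)\rho^{-2}+O(1)$, we get $|\Delta f_r|\approx \rho^{-2}\big(|\psi''|L^{-2}+(n-2)|\psi'|L^{-1}\big)$ with $L=\log(R_0/r)$. The leading term in $L^{-1}$ dominates, and this yields
$$
{\rm cap}_{2,\frac n2}(K_r,\Omega)\le\big(c_n+o(1)\big)\,L^{1-\frac n2},\qquad \big({\rm cap}_{2,\frac n2}\big)^{\frac{2}{2-n}}\ge\big(c_n^{\frac2{2-n}}-o(1)\big)L.
$$
Here the sharp constant is $c_n=\sigma_{n-1}(n-2)^{n/2}$, up to the optimisation of $\psi$. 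Consequently
$$
|K_r|^{-1}\exp\Big(-\beta\,{\rm cap}^{\frac2{2-n}}\Big)\lesssim r^{-n}(r/R_0)^{\beta c_n^{2/(2-n)}-o(1)}.
$$

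\emph{Main obstacle.} The hard step is matching this exponent with the stated threshold $\frac{n(\sigma_{n-1})^{2/(n-2)}}{(n-2)^{n/(2-n)}}$. As written, the bound above tends to $0$ only when $\beta c_n^{2/(2-n)}>n$, which is the opposite range from the one asserted. My first attempt to resolve this would be to keep $K$ of fixed size and exploit the logarithm at the outer scale instead. That means letting the outer radius of the logarithmic cutoff grow, that is, taking the infimum over condensers exhausting $\mathbb M^n$ whose geometry is close to Euclidean on large balls. Then $|K|$ stays bounded below while $L\to\infty$, and the quantity behaves like $e^{-\beta c L}\to0$ for every $\beta>0$, in particular for every $\beta$ below the threshold. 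If $\Omega$ must stay fixed, I would instead recheck the sign convention of the exponent: the threshold is exactly the value $n\,c_n^{2/(n-2)}$ produced by the sharp computation above. So the correct statement is probably the dichotomy "$=0$ for $\beta$ beyond the threshold", and I would verify this against the authors' normalisation before writing out the details.
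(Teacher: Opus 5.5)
\textbf{First case.} Your argument for $1<p\neq\frac n2$ is correct. For $p>\frac n2$ the range of $\alpha$ is empty. For $1<p<\frac n2$ the scaled cutoff $\phi(\rho/r)$ gives ${\rm cap}_{2,p}(K_r,\Omega)\le Cr^{n-2p}$; take $\phi=1$ on $(-\infty,\frac32]$ so that $f_r=1$ on a neighbourhood of $K_r$.

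This is a simpler route than the paper's. The paper transplants the exact Euclidean $(2,p)$-potential of Lemma \ref{l31} onto geodesic balls and then must control the curvature error term $f'(t)\big(\Delta t-(n-1)t^{-1}\big)$. Only the exponent $n-2p$ matters here, so your cutoff suffices.

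For $p=\frac n2$, your optimised logarithmic cutoff reproduces exactly the paper's bound
${\rm cap}_{2,\frac n2}(\bar B(x,r),B(x,R))\le\sigma_{n-1}(n-2)^{\frac n2}(-\ln r)^{1-\frac n2}(1+o(1))$,
which the paper obtains from Lemma \ref{l31}(ii),(v).

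\textbf{Case $p=\frac n2$.} The genuine gap is in the $p=\frac n2$ conclusion, and you located it correctly. It cannot be closed, because the assertion as stated is false. Write $\beta_*=n\,\sigma_{n-1}^{2/(n-2)}(n-2)^{n/(n-2)}$ for the stated threshold. Your small-ball bound gives
$$
|\bar B(x,r)|^{-1}\exp\Big(-\beta\big({\rm cap}_{2,\frac n2}(\bar B(x,r),B(x,R))\big)^{\frac2{2-n}}\Big)\lesssim r^{\,n(\frac{\beta}{\beta_*}-1)+o(1)}\quad(r\to0),
$$
and the right-hand side tends to $0$ only when $\beta>\beta_*$. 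The paper's final display claims this infimum is $0$ for $\beta<\beta_*$. That step is simply wrong: for such $\beta$ the quantity blows up as $r\to0$.

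In fact $\beta_*$ is exactly Adams' sharp constant
$\beta(n,2)=\frac{n}{\sigma_{n-1}}\big(\frac{4\pi^{n/2}}{\Gamma(\frac n2-1)}\big)^{\frac n{n-2}}$.
To see this, use $\frac{4\pi^{n/2}}{\Gamma(\frac n2-1)}=(n-2)\sigma_{n-1}$; for $n=4$ both constants equal $32\pi^2$. In $(\mathbb R^n,\mathsf g_0)$, apply Adams' inequality to $f/\|\Delta f\|_{L^{n/2}}$ for every admissible $f$ in \eqref{15}. On $K$ this function equals $\big(\int|\Delta f|^{\frac n2}\big)^{-\frac2n}$, so taking the infimum over $f$ gives
$|K|\exp\big(\beta_*({\rm cap}_{2,\frac n2}(K,\Omega))^{\frac2{2-n}}\big)\le C|\Omega|$.
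Hence the infimum in the statement is at least $(C|\Omega|)^{-1}>0$ for every $\beta\le\beta_*$.

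\textbf{What to keep.} The correct statement is the one you guessed: ``$=0$ for all $\beta>\beta_*$''. This is the version consistent with Adams-type sharpness invoked in Remark \ref{r21}, and your computation proves it as it stands.

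Discard your exhaustion workaround. $\Omega$ is fixed in the statement, so letting the outer domain grow changes the quantity being minimised. Moreover, whether ${\rm cap}_{2,\frac n2}(K,\Omega)\to0$ as $\Omega\uparrow\mathbb M^n$ depends on the global geometry of $(\mathbb M^n,\mathsf g)$, which a general complete manifold does not control.
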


   \begin{remark}
   	\label{r21}
   Here it is perhaps appropriate to emphasize not only that the restrictions of $\{\alpha,\beta\}$ are sharp according to \cite[Theorem 1.2]{AX} for the Euclidean space $(\mathbb R^n,\mathsf{g}_0)$ with
    $$
   \mathsf{g}_0=\Big(\big(\mathsf{g}_0\big)_{ij}\Big)\ \ \text{with}\ \ \big(\mathsf{g}_0\big)_{ij}=\begin{cases} 1& \text{for}\ \ i=j;\\
   0&\text{for}\ \ i\not=j,
   \end{cases}
   $$
   but also Theorem \ref{t41}'s case $p=1$ also holds due to not only \cite[Theorems I.1]{JX} for \eqref{30} but also \eqref{111}'s first identification - namely -
   $$
   {\rm cap}_{2,1}(K,\Omega)=2{\rm cap}_2(K,\Omega).
   $$
   \end{remark}

In order to verify Theorem \ref{t41}, we begin with an investigation of the $\big(2,(1,\infty)\ni p\big)$-capacity of the origin-concentric round ring $\big(\bar{B}(r),B(R)\big)$ in $\mathbb{R}^{n\geq 3}$ with the standard Euclidean metric
     $\mathsf{g}_0$. For the sake of convenience, let $t$ be the distance function of $o.$ To achieve its $(2,p)-$potential, we consider a radial function $f(t(\cdot))$ satisfying
    $$\Delta f=f'(t)\Delta t+f''=f'(t)\bigg(\frac{n-1}{t}\bigg)+f''(t)=(t^{n-1}f'(t))'t^{1-n}.$$
    In order to make $$\Delta(|\Delta f|^{p-2} \Delta f)=0,$$ we can set
    \begin{equation}\label{31}
      (t^{n-1}f'(t))'t^{1-n}=g(t)=
    \begin{cases}
    -(t^{2-n}-\mu^{2-n})^{\frac{1}{p-1}}\ & t\in[r,\mu];
    \\(\mu^{2-n}-t^{2-n})^{\frac{1}{p-1}} & t\in[\mu,R],
    \end{cases}\text{for some}\ \ \mu\in(r,R).
    \end{equation}
    Then
    \begin{equation}\label{32}
    \begin{cases}t^{n-1}f'(t)=\int_r^t s^{n-1}g(s)\,ds;
    \\f(t)=-\int_t^R\tau^{1-n}\int_r^\tau s^{n-1}g(s)\,dsd\tau.
    \end{cases}
    \end{equation}
   \begin{lemma}\label{l31}
     For any
     $$
     \begin{cases}
     n\ge 3;\\
     1<p<\infty;\\
     0<r<R<\infty,
     \end{cases}
     $$
     there is a unique $\mu\in(r,R)$ such that the above-constructed function $f:[r,R]\rightarrow\mathbb{R}$ satisfies
     \begin{itemize}
       \item [\rm (i)] $u(\cdot)=\frac{f(t(\cdot))}{f(r)}$ is the $(2,p)-$potential of $\big(\bar{B}(r),B(R)\big)$ in $\mathbb{R}^{n}.$
       \item [\rm (ii)] $${\rm cap}_{2,p}\big(\bar{B}(r),B(R)\big)={(n-2)\sigma_{n-1}}{\big(f(r)\big)^{1-p}}.$$
       \item [\rm (iii)] If $R>0$ is fixed, then $\mu$ is a smooth function of $r.$ Furthermore,
       $$\lim\limits_{r\rightarrow 0}\left(\frac{r}{\mu}\right)=0.$$
       \item [\rm (iv)] If $R>0$ is fixed, then
       $$\forall t\in[r,R],\ \ |f'(t)|\leq \left(\frac{p}{n(p-1)}\right)r^{\frac{2p-n}{p(p-1)}}t^{1-n}(t^{n-\frac np}-r^{n-\frac np}).$$
       \item [\rm (v)]
        $$f(r)=
       \begin{cases}\frac{p-1}{(n-2p)(n-2)}\left((r^{2-n}-\mu^{2-n})^{\frac{p}{p-1}}r^n-\Big(\mu^{2-n}-R^{2-n}\Big)^{\frac{p}{p-1}}R^n\right)&\text{for}\ \ p\neq\frac n2;
       \\ \frac{\ln r}{2-n}+{\it O}(1) &\text{for}\ \ p=\frac n2.
       \end{cases}$$
       \item [\rm (vi)] For any $R>0,$
       $$
       \begin{cases}
       f(r)\sim \frac{p-1}{(n-2p)(n-2)}r^{2-\frac{n-2}{p-1}}\rightarrow\infty  &\text{for}\ \  p\in(1,\frac{n}{2});
       \\ f(r)\sim \frac{\ln r}{2-n}  &\text{for}\ \ p=\frac n2;
       \\ f(r)\rightarrow \frac{p-1}{(2p-n)(n-2)}\Big (\frac{1}{c}-1\Big)^{\frac{p}{p-1}}R^{\frac{2p-n}{p-1}} &\text{for}\ \ p>\frac n2,
       \end{cases}\ \ \text{as}\ r\rightarrow 0.$$
       Here $c\in(0,1)$ is given by
       $$      \int_0^1 y^{\frac{2}{n-2}-\frac{1}{p-1}}(1-y)^{\frac{1}{p-1}}\, dy=
   \int^1_c y^{\frac{2n-2}{2-n}}(1-y)^{\frac{1}{p-1}}\, dy.$$
     \end{itemize}
   \end{lemma}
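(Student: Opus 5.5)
The plan is to pin down $\mu$ by the boundary condition at $t=R$, and then read off every item from the explicit form of $h:=|\Delta f|^{p-2}\Delta f$.

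\emph{Choice of $\mu$.} By \eqref{32} we already have $f(R)=0$ and $f'(r)=0$. We still need $f'(R)=0$, that is,
$$
F(\mu):=\int_r^R s^{n-1}g_\mu(s)\,ds=0 .
$$
\begin{itemize}
\item For each fixed $s$, $g_\mu(s)$ is strictly decreasing in $\mu$ on both pieces, since $\mu^{2-n}$ decreases as $\mu$ grows. Hence $F$ is continuous and strictly decreasing.
\item At the endpoints, $F(r)>0$ because then $g\ge0$, and $F(R)<0$ because then $g\le 0$.
\end{itemize}
So there is a unique root $\mu\in(r,R)$. Moreover $\partial_\mu F<0$, so the implicit function theorem makes $\mu$ a smooth function of $r$, which gives the first part of (iii).

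\emph{Items (i) and (ii).} On both pieces one checks that
$$
h=|\Delta f|^{p-2}\Delta f=\mu^{2-n}-t^{2-n},
$$
which is harmonic in $B(R)\setminus\bar B(r)$ and vanishes only on the sphere $t=\mu$. For $v\in W^{2,p}_0$, Green's formula gives $\int h\Delta v=\int v\Delta h=0$, so \eqref{19} holds for $u=f/f(r)$. Then Theorem \ref{t21}(i) (uniqueness, with $|\Delta u|>0$ a.e.) identifies $u$ as the $(2,p)$-potential.

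For the capacity, write $\int|\Delta f|^p=\int h\Delta f$ and apply Green again. The interior term $\int f\Delta h$ vanishes, and the boundary terms with $\partial_\nu f$ vanish. Since $f(R)=0$, the only surviving term sits on $t=r$:
$$
f(r)\,h'(r)\,|\partial B(r)|=f(r)(n-2)r^{1-n}\sigma_{n-1}r^{n-1}.
$$
Dividing by $f(r)^p$ gives (ii).

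\emph{Item (v).} Swap the order of integration in \eqref{32}:
$$
f(r)=-\frac1{n-2}\int_r^R s^{n-1}g(s)\big(s^{2-n}-R^{2-n}\big)\,ds .
$$
Using $F(\mu)=0$ we may replace $R^{2-n}$ by $\mu^{2-n}$, which gives
$$
f(r)=\frac1{n-2}\int_r^R s^{n-1}|g|^{p}\,ds .
$$
This is consistent with (ii). Integrating this explicitly, or integrating by parts against $s^{n-1}$ with the change of variables $y=(s/\mu)^{2-n}$, should produce the closed form for $p\ne n/2$. In the case $p=n/2$ a logarithm appears; keeping only its singular part gives $\frac{\ln r}{2-n}+O(1)$.

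\emph{Item (iii), second part: $r/\mu\to0$.} Argue by contradiction: suppose $\mu\le Cr$ along a sequence $r\to0$. Then $\mu\to0$.
\begin{itemize}
\item The positive part of $F$, taken over $[2\mu,R]$, is at least of order $\mu^{-(n-2)/(p-1)}R^n$.
\item The negative part is at most $r^{-(n-2)/(p-1)}\mu^n\lesssim \mu^{\,n-(n-2)/(p-1)}$.
\end{itemize}
The first dominates as $\mu\to0$, so $F\neq0$, a contradiction.

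\emph{Items (vi) and (iv).} For (vi), insert $r/\mu\to0$ into (v). For $p<n/2$ the term $r^{n}(r^{2-n})^{p/(p-1)}$ dominates. For $p>n/2$, rescale $F(\mu)=0$ by $y=(\mu/s)^{n-2}$ and let $r\to0$; the limiting balance equation is exactly the Beta-type identity defining $c$, with $c$ a limiting normalized ratio of $\mu$ to $R$. Substituting this into (v) gives the stated limit.

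For (iv), note that $g$ is increasing and has zero weighted mean. So $t^{n-1}f'(t)$ decreases from $0$ to its minimum at $t=\mu$ and then returns to $0$; in particular $f'\le0$. I would bound $|g(s)|\le s^{(2-n)/(p-1)}$ on $[r,\mu]$. Then $\int_r^t s^{n-1}|g|$ is compared with $r^{(2p-n)/(p(p-1))}\int_r^t s^{n-1-n/p}\,ds$, splitting according to the sign of $2p-n$ and using $s\ge r$ to trade powers. This yields the stated bound.

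I expect (iv) and the exact constants in (vi) to be the main obstacle. The power-trading in (iv) must be done carefully so that the exponents match, and in (vi) the limit of $\mu$ must be identified from the integral equation.
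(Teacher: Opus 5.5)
There is a genuine gap in (iv), and for $p>\frac n2$ it cannot be filled, because the inequality is false in that range. Your monotonicity observation reduces (iv) to $t\in[r,\mu]$. There $|g(s)|\le s^{\frac{2-n}{p-1}}$, which gives
\[
|t^{n-1}f'(t)|\le\int_r^t s^{\frac{2p-n}{p(p-1)}}\,s^{n-1-\frac np}\,ds .
\]
For $p\le\frac n2$ the exponent $\frac{2p-n}{p(p-1)}$ is nonpositive. So $s\ge r$ lets you replace the first factor by $r^{\frac{2p-n}{p(p-1)}}$, and (iv) follows. This in fact repairs the paper's argument, whose first step $s^{n-1}\le r^{\frac np}s^{n-\frac np-1}$ is reversed for $s\ge r$.

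For $p>\frac n2$ the trade goes the wrong way, and no splitting can rescue it. Since $f'\le0$ and $f(R)=0$, integrating (iv) would give
\[
f(r)=\int_r^R|f'(t)|\,dt\le\frac{p}{n(p-1)}\,r^{\frac{2p-n}{p(p-1)}}\int_0^R t^{1-\frac np}\,dt\longrightarrow0\quad\text{as } r\to0,
\]
because $1-\frac np>-1$. This contradicts (vi). It also contradicts (ii) combined with the monotonicity ${\rm cap}_{2,p}(\bar B(r),B(R))\le{\rm cap}_{2,p}(\bar B(r_0),B(R))$ for $r\le r_0$, which keeps $f(r)$ bounded away from $0$. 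What your estimate does give for $p>\frac n2$ is only $|t^{n-1}f'(t)|\le\kappa^{-1}t^{\kappa}$ with $\kappa=n-\frac{n-2}{p-1}>0$, i.e.\ $|tf'(t)|\le\kappa^{-1}t^{\frac{2p-n}{p-1}}$. So (iv) has to be restricted to $1<p\le\frac n2$.

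A smaller point concerns (vi) for $p<\frac n2$. The fact $r/\mu\to0$ does not by itself let you discard $R^n(\mu^{2-n}-R^{2-n})^{\frac p{p-1}}$ in (v). Indeed $\mu\to0$ when $p\le2-\frac2n$, and that term is then of order $R^n\mu^{-\frac{(n-2)p}{p-1}}$. Your own representation fixes this. Since $n-2p>0$, dropping that term gives $f(r)\le\frac{p-1}{(n-2p)(n-2)}r^{\frac{2p-n}{p-1}}$. On the other side, $f(r)\ge\frac1{n-2}\int_r^\mu s^{n-1}(s^{2-n}-\mu^{2-n})^{\frac p{p-1}}\,ds$, and the substitution $s=r\sigma$ with $\mu/r\to\infty$ shows this lower bound is the same quantity times $1+o(1)$.

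Likewise, your integration by parts for (v) does close. Use $(|g|^p)'=\frac{p(n-2)}{p-1}s^{1-n}g$ and $(n-2)f(r)=-\int_r^R sg\,ds$, which is your Fubini formula combined with $F(\mu)=0$. One obtains $\frac{(2p-n)(n-2)}{p-1}f(r)=R^n|g(R)|^p-r^n|g(r)|^p$, which is the paper's closed form without the auxiliary integrals $I_3,I_4$.

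The remaining items are sound. Your two-sided estimate of $F$ proving $r/\mu\to0$ is a more elementary alternative to the paper's rescaled integrals $J_1=J_2$.
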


   \begin{proof} (i) By the construction of $f,$ we read off $$f(R)=f'(r)=0.$$ In order to show that $u(\cdot)$ is the $(2,p)-$potential of $\big(\bar{B}(r),B(R)\big),$
   we only need to check
   $$f'(R)=0.$$
   Hence we need to prove that
   $$
   \text{there exists a unique $\mu\in(r,R),$ such that $\int_r^R s^{n-1}g(s)\,ds=0.$}
   $$
   More concretely, we will show
   \begin{equation}\label{33}
 I_1=I_2\ \ \text{where}\ \  \begin{cases}
   I_1=\int_r^\mu s^{n-1}(s^{2-n}-\mu^{2-n})^{\frac{1}{p-1}}\,ds;\\  I_2=\int_\mu^R s^{n-1}(\mu^{2-n}-s^{2-n})^{\frac{1}{p-1}}\,ds.
   \end{cases}
   \end{equation}
   It is easy to see that
   $$\text{$I_1\ \&\ -I_2$ are smoothly increasing functions of $\mu$ with $I_1(r)=0=I_2(R)$.}
   $$
   Then we get the existence - uniqueness - smoothness of $\mu.$

   (ii) We utilize not only \eqref{110} for
   $$
   p\in (1,\infty)\ \ \&\ \
   (\mathbb M^n,\mathsf{g})=(\mathbb R^n,\mathsf{g}_0)
   $$
   but also the integration-by-parts to calculate
   $$
   \begin{aligned}
   {\rm cap}_{2,p}\big(\bar{B}(r),B(R)\big)&=\int_{B(R)\setminus B(r)}|\Delta u|^{p-2}\Delta u\cdot\Delta u\,d\upsilon_{{\mathsf{g}_0}}
   \\&=-\int_{\partial B(r)}\left(\frac{\partial(|\Delta u|^{p-2}\Delta u)}{\partial\nu}\right)\,d\sigma_{\mathsf{g}_0}
   \\&=-\big(f(r)\big)^{1-p}\int_{\partial B(r)}\frac{d}{dt}|g(t)|^{p-1}\,d\sigma_{\mathsf{g}_0}
   \\&={(n-2)\sigma_{n-1}}{\big(f(r)\big)^{1-p}},
   \end{aligned}$$
where
$$
\Big\{d\upsilon_{{\mathsf{g}_0}},  d\sigma_{{\mathsf{g}_0}}, \nu\Big\}
$$
is the triple of volume element, area element, unit outer normal in $(\mathbb R^n, \mathsf{g}_0)$.

   (iii) In order to control $\mu$ when $R$ is fixed, we will make an analysis for \eqref{33}. Upon setting
   $$s=\mu y^{\frac{1}{n-2}}\ \ \text{in}\ \ I_1\ \ \&\ \  s=\mu y^{\frac{1}{2-n}}\ \ \text{in}\ \ I_2\ \ \text{ seperately},
   $$
   we obtain
   $$J_1=\int_{\big(\frac{r}{\mu}\big)^{n-2}}^1 y^{\frac{2}{n-2}-\frac{1}{p-1}}(1-y)^{\frac{1}{p-1}}\, dy=
   \int^1_{\big(\frac{R}{\mu}\big)^{2-n}}y^{\frac{2n-2}{2-n}}(1-y)^{\frac{1}{p-1}}\, dy=J_2.$$

   Firstly, notice that
   $$\left(\frac{r}{\mu}\right)^{n-2}\left(\frac{R}{\mu}\right)^{2-n}=\left(\frac{r}{R}\right)^{n-2}\rightarrow 0\ \ \text{as}\ \ r\rightarrow 0.$$
   Then at least one of the lower limits of integrations tends to $0$ as $r\rightarrow 0.$ As a consequence, at least one of $J_1$ and $J_2$ approaches an improper integral as $r\rightarrow 0.$ Notice that the improper integral $J_2$ is always divergent as $$\frac{2n-2}{2-n}<-1,$$ while the convergence of the improper integral $J_1$ depends on the value of $$\frac{2}{n-2}-\frac{1}{p-1}.$$
   \par If $$\limsup\limits_{r\rightarrow 0}\left(\frac{r}{\mu}\right)= c>0 -\text{i.e.}- \exists\  r_k\rightarrow 0\ \ \text{as}\ \  k\rightarrow\infty\ \ \text{obeying}\ \
   \lim\limits_{k\rightarrow \infty}\left(\frac{r_k}{\mu_k}\right)= c>0,$$
   then
   $$\lim\limits_{k\rightarrow \infty}\left(\frac{R}{\mu_k}\right)^{2-n}=0,
   $$
   and hence
   $$
   \lim\limits_{k\rightarrow \infty} J_{2,k}=\infty\ \ \text{contradicting the finiteness of $J_1.$}
   $$
   Consequently, we obtain
   $$\lim\limits_{r\rightarrow 0}\left(\frac{r}{\mu}\right)=0.$$

   \par Furthermore, if
   $$p>2-\frac 2n,$$ then $$\frac{2}{n-2}-\frac{1}{p-1}>-1,
   $$ and hence the improper integral $J_1$ is convergent. Accordingly, there holds
    \begin{equation}\label{34}
    \begin{cases}\lim\limits_{r\rightarrow 0} J_2=\lim\limits_{r\rightarrow 0} J_1=\text{a constant};\\
    \lim\limits_{r\rightarrow 0}\left(\frac{R}{\mu}\right)^{2-n}=c;\\
      \int_0^1 y^{\frac{2}{n-2}-\frac{1}{p-1}}(1-y)^{\frac{1}{p-1}}\, dy=
   \int^1_c y^{\frac{2n-2}{2-n}}(1-y)^{\frac{1}{p-1}}\, dy.
   \end{cases}
    \end{equation}

   (iv) According to \eqref{32}, we have that if $t\in[r,R]$ then
   $$\begin{aligned}
   |t^{n-1}f'(t)|& \leq\int_r^t r^{\frac{n}{p}}s^{n-\frac np-1>-1}|g(s)|\,ds
   \\ & \leq r^{\frac{n}{p}}\max\limits_{s\in[r,R]}|g(s)|\int_r^ts^{n-\frac np-1}\,ds
   \\ & \leq r^{\frac{n}{p}}r^{\frac{2-n}{p-1}}\left(\frac{p}{n(p-1)}\right)s^{n-\frac np}|_r^t
   \\ & =\left(\frac{p}{n(p-1)}\right)r^{\frac{2p-n}{p(p-1)}}(t^{n-\frac np}-r^{n-\frac np})
   \end{aligned}
   $$

    (v) According to \eqref{32}, we compute
    $$\begin{aligned}
   (n-2)f(r)&=-(n-2)\int_r^R t^{1-n}\int_r^t s^{n-1}g(s)\,dsdt
   \\ &=t^{2-n}\int_r^t s^{n-1}g(s)\,ds\Bigl|_r^R-\int_r^Rt^{2-n}t^{n-1}g(t)\,dt
   \\ &=0-\int_r^Rt g(t)\,dt
   \\ &=I_3-I_4,
   \end{aligned}$$
   where
   $$
   \begin{cases} I_3=\int_r^\mu t(t^{2-n}-\mu^{2-n})^{\frac{1}{p-1}}\,dt;\\
   I_4=\int_\mu^R t(\mu^{2-n}-t^{2-n})^{\frac{1}{p-1}}\,dt.
   \end{cases}
   $$
   Upon recalling the above-verified formula $I_1=I_2$ and integrating by parts, we get
   $$\begin{aligned}
   I_3 & =\int_r^\mu \left(\frac{p-1}{(2-n)p}\right) t^n\Big((t^{2-n}-\mu^{2-n})^{\frac{p}{p-1}}\Big)'\, dt
   \\ & =\left(\frac{p-1}{(2-n)p}\right) t^n(t^{2-n}-\mu^{2-n})^{\frac{p}{p-1}}\Big|_r^\mu-\int_r^\mu \left(\frac{p-1}{(2-n)p}\right)(t^{2-n}-\mu^{2-n})^{\frac{p}{p-1}} nt^{n-1}\, dt
   \\ &=\left(\frac{p-1}{(n-2)p}\right) r^n(r^{2-n}-\mu^{2-n})^{\frac{p}{p-1}}+\left(\frac{n(p-1)}{(n-2)p}\right)(I_3-\mu^{2-n}I_1).
   \end{aligned}$$
   Similarly, we obtain
   $$\begin{aligned}
   I_4 & =\int_\mu^R \left(\frac{p-1}{(n-2)p}\right) t^n\Big((\mu^{2-n}-t^{2-n})^{\frac{p}{p-1}}\Big)'\, dt
   \\ & =\left(\frac{p-1}{(n-2)p}\right) t^n(\mu^{2-n}-t^{2-n})^{\frac{p}{p-1}}\Big|_\mu^R-\int_\mu^R \left(\frac{p-1}{(n-2)p}\right)(\mu^{2-n}-t^{2-n})^{\frac{p}{p-1}} nt^{n-1}\, dt
   \\ &=\left(\frac{p-1}{(n-2)p}\right) R^n(\mu^{2-n}-R^{2-n})^{\frac{p}{p-1}}+\left(\frac{n(p-1)}{(n-2)p}\right)(I_4-\mu^{2-n}I_2).
   \end{aligned}$$
   Hence we could get such a formula of $f(r)$ that if $p \neq \frac{n}{2}$ then
   \begin{equation*}
   \begin{aligned}
   f(r)&=\frac{I_3-I_4}{n-2}\\
   &=\left(\frac{p-1}{(n-2p)(n-2)}\right)\left((r^{2-n}-\mu^{2-n})^{\frac{p}{p-1}}r^n-\Big(\mu^{2-n}-R^{2-n}\Big)^{\frac{p}{p-1}}R^n\right).
   \end{aligned}
   \end{equation*}
    But nevertheless, if $p=\frac n2$ then we can achieve $f(r)$ through a direct calculation:
   $$\begin{aligned}
   f(r)=\frac{I_3-I_4}{n-2}=\frac{\ln r}{2-n}+{\it O}(1)\ \ \text{as}\ \ r\rightarrow 0.
   \end{aligned}$$
   Then (iv) is proved.

   (iv) It is a straightforward by-product of (iii)-(v)-\eqref{34}.

   \end{proof}

    Based on Lemma \ref{l31}, we can prove Theorem \ref{t41} in the sequel.

    \begin{proof}[Proof of Theorem \ref{t41}]
    Given $x\in \Omega$, we select a small $R>0$ such that $$B(x,R)\subseteq U\subseteq\Omega$$ where $U$ is the normal neighborhood of $x.$  In what follows, for a notational convenience, let $$t(\cdot)={\rm dist}_{\mathsf{g}}(x,\cdot)$$ be the distance function of $x$. Then $t(\cdot)$ is smooth in $B(x,R)\setminus\{x\}.$
     \par In the sequel, we use $C_1,C_2,\cdots$ to denote positive constants depending only on $\{n,p,R\}$.
    \par For $r\in (0,R)$ \& $f$ as defined in Lemma \ref{l31}, we choose the smooth function $$u(\cdot)=\frac{f(t(\cdot))}{f(r)}: B(x,R)\setminus B(x,r)\rightarrow \mathbb{R}$$
    to evaluate
    $$
    \begin{cases} u\big|_{\partial B(x,r)}=1;\\
     u\big|_{\partial B(x,R)}=0;\\
     |\nabla u|\big|_{\partial B(x,r)}=|\nabla u|\big|_{\partial B(x,R)}=0.
     \end{cases}
     $$
    Upon not only noticing
    $$
    \big|\Delta t-({n-1}){t}^{-1}\big|\leq C_1t\ \ \&\ \  |\partial B(x,t)|\leq C_1t^{n-1}\ \ \text{in}\ \ B(x,R)\setminus\{x\},
    $$
    but also using \eqref{31}'s function $g(t)$, we estimate
    $$\begin{aligned}
    {\rm cap}_{2,p}\big(\bar{B}(x,r),B(x,R)\big)&\leq \int_{ B(x,R)\setminus B(x,r)}|\Delta u|^p\,d\upsilon_{\mathsf{g}}
    \\ &=\frac{1}{\big(f(r)\big)^p}\int_r^R \int_{\partial B(x,t)}|f'(t)\Delta t+f''(t)|^p\,d\sigma_{\mathsf{g}} dt
    \\ &\leq \frac{C_2}{\big(f(r)\big)^p}\int_r^R \Big(|g(t)|^p+|tf'(t)|^p\Big)t^{n-1}\, dt,
    \end{aligned}
    $$
whence considering two aspects.
\begin{itemize}
    \item On the one hand, Lemma \ref{l31} (ii) derives
    $$ {\big(f(r)\big)^{-p}}\int_r^R |g(t)|^p \sigma_{n-1} t^{n-1}\, dt={\rm cap}_{2,p}\big(\bar{B}(r),B(R)\big)={(n-2)\sigma_{n-1}}{\big(f(r)\big)^{1-p}},
    $$
    where $\big(\bar{B}(r),B(R)\big)$ is the concentric round ring in $\mathbb{R}^n.$
    \item On the other hand, Lemma \ref{l31} (iv) ensures
    \begin{equation*}
    \begin{aligned}
    \int_r^R |tf'(t)|^pt^{n-1}\, dt&\leq \Big(\frac{p}{n(p-1)}\Big)^p \int_r^R r^{\frac{2p-n}{p-1}}t^{p(1-n)}(t^{n-\frac np}-r^{n-\frac np})^pt^pt^{n-1}\,dt
    \\ &\leq \Big(\frac{p}{n(p-1)}\Big)^p r^{2-\frac{n-2}{p-1}}\int_r^Rt^{2p-pn+n-1}(t^{n-\frac np})^p\,dt
    \\ &\leq C_3 r^{2-\frac{n-2}{p-1}},
    \end{aligned}
    \end{equation*}
    thereby leading to a consideration of next two situations.

    \begin{itemize}
      \item If $1<p<\frac{n}{2},$ then
      $$
      \begin{cases}
      f(r)\sim r^{2-\frac{n-2}{p-1}}\ \ \text{as}\ \ r\rightarrow 0;\\
    {\rm cap}_{2,p}\big(\bar{B}(x,r),B(x,R)\big)\leq C_2\Bigg(\frac{(n-2)\sigma_{n-1}}{\big(f(r)\big)^{p-1}}+\frac{C_3 r^{2-\frac{n-2}{p-1}}}{\big(f(r)\big)^p}\Bigg)=
    C_4r^{n-2p}.
    \end{cases}
    $$
      \item If $p>\frac{n}{2},$ then
      $$
      \begin{cases}
      f(r)\ \ \text{has a positive bound from below};\\
      r^{2-\frac{n-2}{p-1}}<1\ \ \text{for a sufficiently small}\ \ r;\\
    {\rm cap}_{2,p}\big(\bar{B}(x,r),B(x,R)\big)\leq C_2\Bigg(\frac{(n-2)\sigma_{n-1}}{\big(f(r)\big)^{p-1}}+\frac{C_3 r^{2-\frac{n-2}{p-1}}}{\big(f(r)\big)^p}\Bigg)\leq
    C_5.
    \end{cases}
    $$
    \end{itemize}
    As a conclusion, if $$p\neq\frac n2\ \ \&\ \  0< \alpha<1-\frac{2p}{n},$$ then
    $$0\leq\inf\limits_{K\Subset\Omega}\frac{{\rm cap}_{2,p}(K,\Omega)}{|K|^\alpha}\leq \inf\limits_{r>0}\frac{{\rm cap}_{2,p}(\bar{B}(x,r),B(x,R)}{|\bar{B}(x,r)|^\alpha}=0\ \ \text{as}\ \ |\bar{B}(x,r)|\sim r^n.
    $$
\end{itemize}
    Next, let us deal with the endpoint case $p=\frac n2$. For this we need to make a more accurate estimation.

    \begin{itemize}
    \item Firstly, we have
    $$\begin{aligned}
    |\Delta f(t)|^{\frac n2}&=|g(t)+f'(t)(\Delta t-(n-1)t^{-1})|^{\frac n2}\\
    &\leq \Big(|g(t)|+|C_1tf'(t)|\Big)^{\frac n2}\\
     &\leq|g(t)|^{\frac n2}+\sum\limits_{k=1}^n\binom{n}{k}|g(t)|^{\frac k2}|C_1tf'(t)|^{\frac {n-k}{2}}.
    \end{aligned}
    $$
    Notice that  $$|\partial B(x,t)|\leq \sigma_{n-1}t^{n-1}+C_6t^n\leq C_1t^{n-1}\ \ \text{in}\ \
    B(x,R)\setminus\{x\}.
    $$
    Then
        $$\begin{aligned}
    &{\rm cap}_{2,\frac n2}\big(\bar{B}(x,r),B(x,R)\big)\\
    &\ \ \leq \int_{ B(x,R)\setminus B(x,r)}|\Delta u|^{\frac n2}\,d\upsilon_{\mathsf{g}}\\
    &\ \ \leq \frac{1}{f(r)^{\frac n2}}\int_r^R \int_{\partial B(x,t)}\left(|g(t)|^{\frac n2}+C_7\sum\limits_{k=0}^{n-1}|g(t)|^{\frac k2}|tf'(t)|^{\frac {n-k}{2}}\right)\,d\sigma_{\mathsf{g}} dt\\
    &\ \ \leq \frac{\sigma_{n-1}}{f(r)^{\frac n2}}\int_r^R |g(t)|^{\frac n2}\frac{dt}{t^{1-n}}+\frac{C_6}{f(r)^{\frac n2}}\int_r^R |g(t)|^{\frac n2}\frac{dt}{t^{-n}}+\frac{C_1C_7}{f(r)^{\frac n2}}
    \sum\limits_{k=0}^{n-1}\int_r^R \left(\frac{|g(t)|^{\frac k2}}{|tf'(t)|^{\frac{k-n}{2}}}\right)\frac{dt}{t^{1-n}}\\
    &\ \ = M(r)+N(r)+L(r)
    \end{aligned}$$
    \item Secondly, from Lemma \ref{l31} it follows that
    $$M(r)={\rm cap}_{2,\frac n2}\big(\bar{B}(r),B(R)\big)=\frac{(n-2)\sigma_{n-1}}{\big(f(r)\big)^{\frac n2-1}}.
    $$
    Now, the L'hospital rule indicates
    $$N(r)=o(M(r)).
    $$
    Yet, for each
    $$k\in\big\{1,2,\cdots,n-1\big\}$$
    there is
    $$\begin{aligned}
    &\int_r^R |g(t)|^{\frac k2}|tf'(t)|^{\frac{n-k}{2}}t^{n-1}\,dt\\
    &\ \ \leq\left( \int_r^R \Big(|g(t)|^{\frac k2}t^{\frac{(n-1)k}{n}}\Big)^{\frac nk}\,dt\right)^{\frac kn}\left(\int_r^R\Big(|tf'(t)|^{\frac{n-k}{2}}t^{\frac{(n-1)(n-k)}{n}}\Big)^{\frac{n}{n-k}}\, dt\right)^{\frac{n-k}{n}}
    \\
    &\ \ =\left(\int_r^R |g(t)|^{\frac n2}t^{n-1} \right)^{\frac kn}\left(\int_r^R|tf'(t)|^{\frac{n}{2}}t^{n-1}\, dt\right)^{\frac{n-k}{n}}
    \\
    &\ \ \leq  \left(M(r)f(r)^{\frac n2}(\sigma_{n-1})^{-1} \right)^{\frac kn}\cdot (C_3)^{\frac{n-k}{n}}
    \end{aligned}$$
    Whenever $k=0,$ the previous estimation holds automatically due to \eqref{31}. Accordingly, we always have
    $$
    L(r)\leq C_8 \big(f(r)\big)^{\frac {k}n-\frac n2}.
    $$

   \item Thirdly, upon recalling
    $$
    f(r)=\frac{\ln r}{2-n}+{\it O}(1)\ \ \text{as}\ \ r\rightarrow 0,
    $$
    we obtain
    $${\rm cap}_{2,\frac n2}\big(\bar{B}(x,r),B(x,R)\big)\leq \sigma_{n-1}(n-2)^{\frac n2}(-\ln r)^{1-\frac n2}\Big(1+{\it o}(1)\Big)\ \ \text{as}\ \ r\rightarrow 0,
    $$
    which is equivalent to
    $$
    \exp\Big(-(\sigma_{n-1})^\frac{2}{n-2}(n-2)^\frac{n}{n-2}\big({\rm cap}_{2,\frac n2}\big(\bar{B}(x,r),B(x,R)\big)\big)^{\frac{2}{2-n}}\Big)\leq r^{1+o(1)}\ \text{as} \ r\rightarrow 0.
    $$
    \end{itemize}
    Consequently, for any $$0<\beta<n(\sigma_{n-1})^\frac{2}{n-2}(n-2)^\frac{n}{n-2},$$
    we arrive at the desired estimation
    $$\begin{aligned}
    0&\leq \inf\limits_{K\Subset \Omega}|K|^{-1}{{\exp\left(-\beta \big({\rm cap}_{2,\frac n2}(K,\Omega)\big)^{\frac{2}{2-n}}\right)}}\\
    &\leq
    \inf\limits_{r>0}|\bar{B}(x,r)|^{-1}{\exp\left(-\beta \big({\rm cap}_{2,\frac n2}(\big(\bar{B}(x,r),B(x,R)\big))\big)^{\frac{2}{2-n}}\right)}
    \\ &\leq \inf\limits_{r>0}|\bar{B}(x,r)|^{-1}{\Big(r^{1+o(1)}\Big)^{{\beta}{(\sigma_{n-1})^\frac{2}{2-n}(n-2)^\frac{n}{2-n}}}}
    \\ &=0.
    \end{aligned}
    $$
    \end{proof}

\section{Relationships with variational capacities}\label{s4}

This section explores how the second-order capacity ${\rm cap}_{2,1<p<\infty}(\cdot,\cdot)$ is related to the first-order capacity ${\rm cap}_{1<p<\infty}(\cdot,\cdot)$, thereby discovering the folllowing nice formula
$$
\underset{p\to 1}{\lim}{\rm cap}_{2,p}(\cdot,\cdot)={\rm cap}_{2,1}(\cdot,\cdot)=2{\rm cap}_{2}(\cdot,\cdot),
$$
for which the first equality is somewhat driven by \cite{Mey, Mo} while the second equality exists as a resolution to the open problem in Remark 2.16 of \cite{GPS} (whose main ideas have been extended by \cite{HS} to some discrete graphs).

    \begin{lemma}\label{l41} For $p\in [1,\infty)$ let $(K,\Omega)$ be a smooth condenser in $(\mathbb M^n,\mathsf{g})$ with the area element $d\sigma_{\mathsf{g}}$ \& the outer normal derivative $\partial/\partial\nu$. If
    $$
    h\in L^{\frac {p}{p-1}}(\Omega\setminus K)\ \ \&\ \ \Delta h=0,
    $$
    \begin{equation}
    \label{40}
    \big({\rm cap}_{2,p}(K,\Omega)\big)^{\frac 1p}\geq \Bigg|\int_{\partial\Omega}\left(\frac{\partial h}{\partial\nu}\right)\, d\sigma_{\mathsf{g}}\Bigg |\begin{cases} {\Big(\int_{\Omega\setminus K}|h|^{\frac {p}{p-1}}\, d\upsilon_{\mathsf{g}}\Big)^{\frac{1-p}p}}&\text{for}\ \ p\in (1,\infty);\\
    \|h\|^{-1}_{L^\infty(\Omega\setminus K)}&\text{for}\ \ p=1.
    \end{cases}
    \end{equation}
 Moreover, the above equality holds iff
 $$
 \begin{cases} h=\lambda |\Delta v|^{p-2}\Delta v\ \ \text{for some constant $\lambda\neq 0$};\\
 \text{$v$ is the $(2,p)-$potential of
    $(K,\Omega)$ with $|\Delta v|>0$ a.e. on $\Omega\setminus K$}.
\end{cases}
$$
    \end{lemma}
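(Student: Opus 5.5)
Take any admissible $f\in C_0^\infty(\Omega)$ with $f=1$ in a neighbourhood of $K$. The whole argument will rest on one integration-by-parts identity followed by H\"older's inequality.

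\textbf{Step 1 (Green's identity).} Apply Green's second identity on $\Omega\setminus K$ to $f$ and the harmonic function $h$:
$$
\int_{\Omega\setminus K}h\,\Delta f\,d\upsilon_{\mathsf{g}}=\int_{\Omega\setminus K}f\,\Delta h\,d\upsilon_{\mathsf{g}}+\int_{\partial(\Omega\setminus K)}\Big(h\frac{\partial f}{\partial\nu}-f\frac{\partial h}{\partial\nu}\Big)\,d\sigma_{\mathsf{g}} .
$$
The volume term on the right vanishes because $\Delta h=0$. The boundary terms are controlled by the boundary data. On $\partial\Omega$ we have $f=0=\partial_\nu f$. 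On $\partial K$ we have $f=1$ and $\partial_\nu f=0$. Hence
$$
\int_{\Omega\setminus K}h\,\Delta f\,d\upsilon_{\mathsf{g}}=-\int_{\partial K}\frac{\partial h}{\partial\nu}\,d\sigma_{\mathsf{g}} ,
$$
where the normal on $\partial K$ points out of $\Omega\setminus K$.

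\textbf{Step 2 (moving the flux to $\partial\Omega$).} Since $h$ is harmonic, the divergence theorem on $\Omega\setminus K$ gives
$$
\int_{\partial\Omega}\partial_\nu h\,d\sigma_{\mathsf{g}}+\int_{\partial K}\partial_\nu h\,d\sigma_{\mathsf{g}}=0 .
$$
So the total flux through $\partial K$ equals, up to sign, the flux through $\partial\Omega$. Combining this with Step 1,
$$
\Big|\int_{\partial\Omega}\partial_\nu h\,d\sigma_{\mathsf{g}}\Big|=\Big|\int_{\Omega\setminus K}h\,\Delta f\,d\upsilon_{\mathsf{g}}\Big| .
$$

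\textbf{Step 3 (H\"older).} For $p>1$, H\"older's inequality bounds the right-hand side by
$$
\|h\|_{L^{p/(p-1)}(\Omega\setminus K)}\Big(\int_{\Omega\setminus K}|\Delta f|^p\,d\upsilon_{\mathsf{g}}\Big)^{1/p}\le \|h\|_{L^{p/(p-1)}(\Omega\setminus K)}\Big(\int_{\Omega}|\Delta f|^p\,d\upsilon_{\mathsf{g}}\Big)^{1/p}.
$$
For $p=1$ the same bound holds with $\|h\|_{L^\infty}$ in place of the $L^{p/(p-1)}$ norm. Taking the infimum over admissible $f$ yields \eqref{40}.

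The class $\mathfrak{C}^{1,1}$ in \eqref{16} and the density of smooth functions in $W^{2,p}_0$ let us work with $C^{1,1}$ or $W^{2,p}$ functions, so the infimum is unaffected. The justification of Green's identity requires some care. Since $h$ is only assumed to lie in $L^{p/(p-1)}$, its boundary trace and normal derivative must make sense. I would handle this by applying the identity on slightly shrunken smooth subdomains $\Omega_\epsilon\setminus K_\epsilon$, where $h$ is smooth by interior regularity. I would then use that the flux of $\partial_\nu h$ through any hypersurface homologous to $\partial\Omega$ is constant, and let $\epsilon\to0$. Alternatively, I would simply interpret $\int_{\partial\Omega}\partial_\nu h$ as this constant flux.

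\textbf{Equality case.} For $p>1$, equality in the final inequality of Step 3 requires an $f$ that attains the capacity. By Theorem~\ref{t21}(i), under $|\Delta v|>0$ a.e., this is the $(2,p)$-potential $v$. Equality in H\"older's inequality additionally forces $h$ and $|\Delta v|^{p-1}$ to be proportional with matching signs, that is, $h=\lambda|\Delta v|^{p-2}\Delta v$.

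Conversely, suppose $h=\lambda|\Delta v|^{p-2}\Delta v$. By \eqref{19}, $h$ is weakly harmonic, and it is smooth by Weyl's lemma. H\"older's inequality is then an equality, and the infimum is attained by $f=v$ via \eqref{110}.

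The main obstacle is the extremality step. Minimizing sequences $f$ that converge to $v$ only in $W^{2,p}$ must be shown to pass to the limit in both sides of the inequality. For $p=1$ a minimizer may not exist, so that case needs separate handling.
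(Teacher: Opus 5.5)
Your proof of \eqref{40} follows the paper's argument: Green's second identity against the harmonic $h$, then H\"older, then the infimum over admissible functions. The paper pairs $h$ with $1-v$, which vanishes on $\partial K$ and equals $1$ on $\partial\Omega$, so the flux through $\partial\Omega$ appears directly. You pair $h$ with $f$ and then use flux conservation, which is the same computation. Your choice of test functions $f\in C_0^\infty(\Omega)$ with $f=1$ near $K$ is slightly better. With it, $\Delta f$ is supported in a compact subset of $\Omega\setminus K$, so Green's identity can be applied on an interior annulus where $h$ is smooth; the paper passes over this boundary-regularity point.

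The step you leave open in the equality case closes in one line. The functional $\phi\mapsto\int_{\Omega\setminus K}h\,\Delta\phi\,d\upsilon_{\mathsf{g}}$ vanishes on $C_0^\infty(\Omega\setminus K)$ because $\Delta h=0$. It is continuous in the $W^{2,p}$ norm because $h\in L^{p/(p-1)}$, so it vanishes on $W^{2,p}_0(\Omega\setminus K)$. Hence $\int h\,\Delta v$ also equals the flux of $h$ for the $(2,p)$-potential $v\in\mathfrak{X}$. Since ${\rm cap}_{2,p}(K,\Omega)=\int|\Delta v|^p$ by Theorem~\ref{t21}(i), equality in \eqref{40} for $p>1$ is exactly equality in H\"older's inequality for this $v$. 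That gives both directions of the characterization.

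For $p=1$, no separate argument can work, because the equality clause is false there. Take $h=u-\frac12$ with $u$ the $2$-capacity potential. Then $\|h\|_{L^\infty(\Omega\setminus K)}=\frac12$ and $\big|\int_{\partial\Omega}\partial_\nu h\,d\sigma_{\mathsf{g}}\big|={\rm cap}_2(K,\Omega)$, so Theorem~\ref{t22} gives equality in \eqref{40}. Yet $h$ is not of the required form. A harmonic $h=\lambda\,{\rm sgn}(\Delta v)$ with $|\Delta v|>0$ a.e. would be continuous with values in $\{\pm\lambda\}$, hence locally constant with zero flux. That would make the right-hand side of \eqref{40} vanish, whereas ${\rm cap}_{2,1}(K,\Omega)=2{\rm cap}_2(K,\Omega)>0$. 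So the characterization holds only for $p\in(1,\infty)$, which is also the only range the paper's own H\"older-equality remark covers.
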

    \begin{proof} It is enough to verify \eqref{40} for $p\in (1,\infty)$. In fact, for any $v\in\mathfrak{C}^{1,1}(\Omega\setminus K)$ we use the H\"older inequality to calculate
      $$\begin{aligned}
      \Bigg|\int_{\partial\Omega}\Bigg(\frac{\partial h}{\partial\nu}\Bigg) d\sigma_{\mathsf{g}}\Bigg| &=\Bigg|\int_{\partial(\Omega\backslash K)}(1-v)\Bigg(\frac{\partial h}{\partial\nu}\Bigg) d\sigma_{\mathsf{g}}\Bigg|
      \\ &=\Bigg|\int_{\Omega\backslash K}\big((1-v)\Delta h-h\Delta(1-v)\big)\,d\upsilon_{\mathsf{g}}+\int_{\partial(\Omega\backslash K)}h\Bigg(\frac{\partial (1-v)}{\partial\nu}\Bigg) d\sigma_{\mathsf{g}}\Bigg|
      \\ &=\Bigg|\int_{\Omega\backslash K} (\Delta v)h\, d\upsilon_{\mathsf{g}}\Bigg|
      \\ &\leq\int_{\Omega\backslash K} |h\Delta v|\,d\upsilon_{\mathsf{g}}
      \\ &\leq\left(\int_{\Omega\backslash K} |h|^{\frac{p}{p-1}}\,d\upsilon_{\mathsf{g}}\right)^{1-\frac 1p}\left(\int_{\Omega\backslash K} |\Delta v|^p\,d\upsilon_{\mathsf{g}}\right)^{\frac 1p}
      \end{aligned}
      $$
      Clearly, {if the equality in the last inequality is achieved, then
      $$
      h\Delta v\  \text{is either non-negative or non-positive with}\ |h|^{\frac{p}{p-1}}=\lambda |\Delta v|^p\ \text{in}\ \Omega\backslash K.
      $$}
      Taking the infimum over all $v\in\mathfrak{C}^{1,1}(\Omega\setminus K)$ completes the argument.
    \end{proof}

    \begin{theorem}\label{t42} Under the same hypothesis as in Lemma \ref{l41} there holds

    \begin{equation}
    \label{400}
    \big({\rm cap}_{2,p}(K,\Omega)\big)^{\frac 1p}\geq \frac{{\rm cap}_2(K,\Omega)}{\big(m_p(K,\Omega)\big)^{\frac{p-1}{p}}},
    \end{equation}
    where
    $$
    \begin{cases}
    \big(m_p(K,\Omega)\big)^\frac{p-1}{p}=\begin{cases} \min\limits_{c\in\mathbb{R}}\Big(\int_{\Omega\setminus K}|u-c|^{\frac{p}{p-1}}\, d\upsilon_{\mathsf{g}}\Big)^\frac{p-1}{p}&\text{for}\ \ p\in (1,\infty);\\
    {\min\limits_{c\in\mathbb{R}}\|u-c\|_{L^\infty(\Omega\setminus K)}}&\text{for}\ \ p=1,
    \end{cases}\\
    \text{$u$ is the $2-$capacity potential of $(K,\Omega).$}
    \end{cases}
    $$
    Moreover, the inequality in \eqref{400} becomes an equality iff
    $$\Delta v\big|_{\partial K}\ \ \&\ \ \Delta v\big|_{\partial \Omega}$$ are constants for the $(2,p)-$potential function $v$ of $(K,\Omega).$
    \end{theorem}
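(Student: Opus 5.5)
The plan is to apply Lemma \ref{l41} with the test function $h=u-c$, where $u$ is the $2$-capacity potential of $(K,\Omega)$ and $c\in\mathbb R$ is arbitrary. The potential $u$ is harmonic in $\Omega\setminus K$, equals $1$ on $\partial K$ and $0$ on $\partial\Omega$, and is smooth up to the boundary because the condenser is smooth. Hence $h=u-c$ is harmonic and belongs to $L^{\frac p{p-1}}(\Omega\setminus K)$, or to $L^\infty$ when $p=1$. Its normal derivative does not depend on $c$. Green's formula then gives the flux identity
$$
\int_{\partial\Omega}\frac{\partial u}{\partial\nu}\,d\sigma_{\mathsf g}=-\int_{\Omega\setminus K}|\nabla u|^2\,d\upsilon_{\mathsf g}=-{\rm cap}_2(K,\Omega).
$$
To see this, note that $\int_{\Omega\setminus K}|\nabla u|^2=\int_{\partial(\Omega\setminus K)}u\,\partial_\nu u$, and the $\partial\Omega$ part vanishes since $u=0$ there. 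Also $\int_{\partial(\Omega\setminus K)}\partial_\nu u=0$, so the $\partial K$ flux equals minus the $\partial\Omega$ flux. With the sign conventions made consistent, this yields $\big|\int_{\partial\Omega}\partial_\nu h\big|={\rm cap}_2(K,\Omega)$.

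Substituting into \eqref{40} gives
$$
\big({\rm cap}_{2,p}(K,\Omega)\big)^{1/p}\ge {\rm cap}_2(K,\Omega)\,\|u-c\|^{-1}_{L^{p/(p-1)}(\Omega\setminus K)}
$$
for every $c$, with the $L^\infty$ norm when $p=1$. Minimizing the norm over $c\in\mathbb R$ gives \eqref{400}. The minimum is attained, since $c\mapsto\|u-c\|$ is continuous, convex and coercive.

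For the equality case I will use the characterization in Lemma \ref{l41}. Equality in \eqref{400} means equality in \eqref{40} for $h=u-c_*$, where $c_*$ is the minimizing constant. By the lemma this happens iff $u-c_*=\lambda|\Delta v|^{p-2}\Delta v$ for the $(2,p)$-potential $v$, with $|\Delta v|>0$ a.e.

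For the forward direction, $u-c_*$ is constant on $\partial K$ and on $\partial\Omega$, so $|\Delta v|^{p-2}\Delta v$ is too, and hence so is $\Delta v$. For the converse, assume $\Delta v$ is constant on each boundary component. By Theorem \ref{t21} the function $w=|\Delta v|^{p-2}\Delta v$ is harmonic, and it is constant on $\partial K$ and on $\partial\Omega$. Uniqueness for the Dirichlet problem then gives $w=a u+b$ for some constants $a,b$. This is an affine function of $u$, so $w=\lambda(u-c')$ for suitable $\lambda,c'$. The case $a=0$ is excluded because $w$ constant would force $\Delta v$ constant, which is incompatible with \eqref{19}. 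Equality in Lemma \ref{l41} then holds for $h=u-c'$.

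The main obstacle is showing that $c'=c_*$, i.e.\ that this particular $c'$ minimizes $\|u-c\|$. Otherwise the chain of inequalities only gives equality with $c'$ in place of the minimum. I would handle this with the first-order condition. The minimizer $c_*$ is characterized by $\int_{\Omega\setminus K}|u-c|^{\frac1{p-1}}{\rm sgn}(u-c)\,d\upsilon_{\mathsf g}=0$. Since $|u-c'|^{\frac{1}{p-1}}{\rm sgn}(u-c')$ is proportional to $\Delta v$, this condition becomes $\int_{\Omega\setminus K}\Delta v\,d\upsilon_{\mathsf g}=0$. That identity follows from \eqref{19} with test function $v\equiv$ (cut-off of) $1$, or more directly from the divergence theorem together with $|\nabla v|=0$ on $\partial(\Omega\setminus K)$. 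Strict convexity for $p>1$ then gives uniqueness of the minimizer, so $c'=c_*$. The case $p=1$ I would treat by the same reasoning in the $L^\infty$ norm, or by the limiting argument deferred to Theorem \ref{t22}.
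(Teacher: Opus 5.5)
Your proposal is correct and follows the paper's proof. Lemma \ref{l41} applied to $h=u-c$, together with the flux identity, gives the bound for every $c$, and minimizing over $c$ gives \eqref{400}. The equality case comes from the equality clause of Lemma \ref{l41} together with Dirichlet uniqueness for the harmonic function $|\Delta v|^{p-2}\Delta v$; the paper writes the corresponding $\lambda,c$ explicitly in terms of the boundary values $a\neq b$.

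The step you single out as the main obstacle is actually automatic. Equality in \eqref{40} for $h=u-c'$ yields $\big({\rm cap}_{2,p}(K,\Omega)\big)^{1/p}={\rm cap}_2(K,\Omega)/\|u-c'\|\le{\rm cap}_2(K,\Omega)/\min_{c}\|u-c\|\le\big({\rm cap}_{2,p}(K,\Omega)\big)^{1/p}$, which forces equality in \eqref{400} directly. Your first-order argument via $\int_{\Omega\setminus K}\Delta v\,d\upsilon_{\mathsf g}=0$ is nonetheless valid. Note also that $a=0$ is excluded by this identity together with $|\Delta v|>0$ a.e., not by \eqref{19}: a constant $\Delta v$ satisfies \eqref{19} trivially.
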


    \begin{proof} It suffices to demonstrate \eqref{400}'s case $p\in (1,\infty)$ in the sequel.
    	
     Firstly, recall that if $u$ is the $2-$capacity potential of $(K,\Omega)$ - that is -
     $$\begin{cases}
     \Delta u=0 \ & \text{in}\ \Omega\setminus K; \\
     u=1 & \text{on } \ \ \partial K;\\
     u=0 & \text{on } \ \ \partial \Omega,
     \end{cases}
     $$
     then
     $$\text{not only $u$ is smooth but also $
     {\rm cap}_2(K,\Omega)=\int_{\Omega\setminus K}|\nabla u|^2\, d\upsilon_{\mathsf{g}}=\int_{\partial\Omega}\Bigg(\frac{\partial u}{\partial\nu}\Bigg)\, d\sigma_{\mathsf{g}}$},
     $$
and  hence
     $$\text{Lemma \ref{l41}}\Longrightarrow\big({\rm cap}_{2,p}(K,\Omega)\big)^{\frac 1p}\geq \frac{{\rm cap}_2(K,\Omega)}{\Big(\int_{\Omega\setminus K}|u-c|^{\frac {p}{p-1}}\, d\upsilon_{\mathsf{g}}\Big)^{1-\frac 1p}}\ \ \forall\ \ c\in\mathbb R.
     $$

    \par Secondly, $m_p(K,\Omega)$ is well-defined as the minimum can be obtained by noticing that the function $$\mathbb R\ni c\mapsto \int_{\Omega\setminus K}|u-c|^{\frac{p}{p-1}}\, d\upsilon_{\mathsf{g}}$$
     not only is continuous but also tends to $\infty$ as $c\rightarrow\infty.$ Meanwhile, it is easy to find that the minimum point of this function is in the interval $(0,1).$
     \par Thirdly, if the equality of the last inequality is achieved, then
     $$
     u-c=\lambda|\Delta v|^{p-2}\Delta v\ \ \text{
     for some constants $c$ \& $\lambda\neq 0.$},
 $$
 and hence
 $$\Delta v\big|_{\partial K}\ \ \&\ \ \Delta v\big|_{\partial \Omega}\ \ \text{are constants.}
 $$
 Meanwhile, if $$\Delta v\big|_{\partial K}\equiv a\neq b\equiv\Delta v\big|_{\partial \Omega},$$ then setting
     $$\lambda=(|b|^{p-2}b-|a|^{p-2}a)^{-1} \ \&\ c=-|a|^{p-2}a\lambda$$
      would lead to the equality.
 \end{proof}
    \begin{remark}\label{r43} If
      $$
      \begin{cases}
      1<p<\infty;\\
      1<q=\frac{p}{p-1}<\infty;\\
      f(c)=\int_{\Omega\setminus K}|u-c|^{q}\, d\upsilon_{\mathsf{g}},
      \end{cases}
      $$
      then this last function $f$ is not only differential but also convex, and hence its minimum point $c_0$ is unique - in particular - if $p=q=2$ then
            $$c_0=\frac{\int_{\Omega\setminus K} u\, d\upsilon_{\mathsf{g}}}{|\Omega|-|K|}.$$
  \end{remark}

   Notably, we can answer \cite[Remark 2.16]{GPS}'s interesting question via the following assertion.

   \begin{theorem}\label{t22}
   	If $(K,\Omega)$ is a smooth condenser in $(\mathbb M^n,\mathsf{g})$, then \eqref{111} holds.
   	\end{theorem}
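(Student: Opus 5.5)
The plan is to prove the two cases of \eqref{111} separately: a lower bound for $p=1$ from Theorem \ref{t42}, and upper bounds for $p\in(0,1]$ by test functions of the form $f=\phi\circ u$. Throughout, $u$ is the $2$-capacity potential of $(K,\Omega)$: it is harmonic in $\Omega\setminus K$, equals $1$ on $\partial K$ and $0$ on $\partial\Omega$, and is smooth up to $\partial(\Omega\setminus K)$. By the maximum principle $0\le u\le1$. By the Hopf lemma $|\nabla u|\ge c_0>0$ in some collar neighbourhood of $\partial K\cup\partial\Omega$, so there is $\delta_0>0$ such that $\{u<\delta_0\}$ and $\{u>1-\delta_0\}$ lie in this collar.

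\emph{Lower bound for $p=1$.} Apply Theorem \ref{t42} with $p=1$, which rests on Lemma \ref{l41} with $h=u-c$. Since $u$ takes exactly the values in $[0,1]$, we have $\min_{c\in\mathbb R}\|u-c\|_{L^\infty(\Omega\setminus K)}=\frac12$, attained at $c=\frac12$. Hence ${\rm cap}_{2,1}(K,\Omega)\ge 2{\rm cap}_2(K,\Omega)$.

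\emph{Upper bounds.} Let $\phi\in C^\infty([0,1])$ satisfy $\phi=0$ on $[0,\epsilon]$, $\phi=1$ on $[1-\epsilon,1]$, $\epsilon<\delta_0$. Then $f=\phi(u)$, extended by $1$ on $K$ and by $0$ off $\Omega$, lies in $C_0^\infty(\Omega)$ and equals $1$ near $K$, so it is admissible in \eqref{15}. Harmonicity of $u$ gives
$$\Delta f=\phi''(u)|\nabla u|^2 .$$
For $p=1$, the coarea formula and constancy of the flux $\int_{\{u=s\}}|\nabla u|\,d\sigma_{\mathsf g}={\rm cap}_2(K,\Omega)$ (as in the proof of Theorem \ref{t42}) yield
$$\int_\Omega|\Delta f|\,d\upsilon_{\mathsf g}=\int_0^1|\phi''(s)|\int_{\{u=s\}}|\nabla u|\,d\sigma_{\mathsf g}\,ds={\rm cap}_2(K,\Omega)\int_0^1|\phi''(s)|\,ds .$$
The constraints are $\phi'(s)=0$ near $s=0$ and $s=1$ and $\int_0^1\phi'=1$. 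Choose $\phi'$ to be a smooth nonnegative bump that rises monotonically on $[\epsilon,2\epsilon]$ to the height $(1-3\epsilon)^{-1}$, stays constant there, and falls monotonically on $[1-2\epsilon,1-\epsilon]$. This choice gives
$$\int_0^1|\phi''|=\frac{2}{1-3\epsilon}+O(\epsilon).$$
Letting $\epsilon\to0$ gives ${\rm cap}_{2,1}\le 2{\rm cap}_2$, which together with the lower bound settles $p=1$. The arbitrary smooth rescaling needed to make $\int\phi'=1$ exactly is routine.

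For $p\in(0,1)$, take the same $\phi$. Then $|\phi''|\le C/\epsilon$, and $\phi''$ is supported in $[\epsilon,2\epsilon]\cup[1-2\epsilon,1-\epsilon]$. The support of $\Delta f$ is therefore contained in $\{\epsilon<u<2\epsilon\}\cup\{1-2\epsilon<u<1-\epsilon\}$, which lies in the collar where $|\nabla u|\ge c_0$. By the coarea formula, this set has volume
$$\int_{\epsilon}^{2\epsilon}\int_{\{u=s\}}|\nabla u|^{-1}\,d\sigma_{\mathsf g}\,ds+\int_{1-2\epsilon}^{1-\epsilon}\int_{\{u=s\}}|\nabla u|^{-1}\,d\sigma_{\mathsf g}\,ds\le C\epsilon .$$
Also $|\Delta f|\le C\epsilon^{-1}\sup|\nabla u|^2$, so
$$\int|\Delta f|^p\,d\upsilon_{\mathsf g}\le C\epsilon^{1-p}\to0 .$$
This gives ${\rm cap}_{2,p}=0$. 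The only delicate point is the lower bound, that the Hölder/duality step of Lemma \ref{l41} really passes to $p=1$ with the $L^\infty$ norm. This is immediate from the same integration by parts, since $\big|\int (\Delta v)h\big|\le\|h\|_\infty\int|\Delta v|$.
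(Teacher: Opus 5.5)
Your proposal is correct and follows the paper's route. The lower bound comes from Theorem \ref{t42} (Lemma \ref{l41} with $h=u-c$) and $\min_{c}\|u-c\|_{L^\infty}=\frac12$. The upper bounds come from composing the harmonic $2$-potential with a profile: for $p=1$, coarea plus constant flux reduces matters to a profile whose second derivative has total variation close to $2$, and for $p<1$ that second derivative is concentrated on thin shells. Your smooth bump that is flat near both endpoints (so $\phi(u)$ is directly admissible in \eqref{15}) and your use of the Hopf lemma only in a boundary collar are tidier than the paper's explicit sine profile and its global claim that $\nabla u\neq 0$ on $\overline{\Omega\setminus K}$, but the argument is the same.
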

   \begin{proof} Two circumstances must be handled separately.
   	\begin{itemize}
   		\item If $p=1$, then Theorem \ref{t42} yields
   	\begin{equation}
   	\label{41}
   	{\rm cap}_{2,1}(K,\Omega)\geq \frac{{\rm cap}_2(K,\Omega)}{\min\limits_{c\in\mathbb{R}}\|u-c\|_{L^\infty(\Omega\setminus K)}}=2{\rm cap}_2(K,\Omega).
   	\end{equation}
   	Thus, in order to reach \eqref{111}'s case $p=1$, we only need to prove \eqref{41}'s reversed inequality
   	\begin{equation}
   	\label{42}
   	{\rm cap}_{2,1}(K,\Omega)\leq 2{\rm cap}_2(K,\Omega).
   	\end{equation}
   	
   	In doing so, we may assume that
   	$u$ is the $2-$potential of $(K,\Omega)$, whence trying to show that $u$ is ``nearly" the $(2,1)-$potential of $(K,\Omega).$ To be more precise, note that
   	$$
   	\text{$u$ is smooth in $\overline{\Omega\setminus K}$ with
$\begin{cases}
   	u|_{\partial K}=1;\\
   	u|_{\partial\Omega}=0;\\
   	|\nabla u|\big|_{\partial(\Omega\setminus K)}\neq 0.
   	\end{cases}$}
   	$$
   	So, we are going to make a transformation of $u.$ 	
   	
   	For some small $\delta\in (0,{2^{-1}}]$, define not only a ``bump" function:
   	\begin{equation}\label{43}
   	\begin{cases}
   	f\in C^2[0,{2^{-1}}];\\
   	f(0)=f'(0)=f'(\delta)=f''(\delta)=0;\\
   	f\big|_{[\delta,{2^{-1}}]}=1,
   	\end{cases}
   	\end{equation}
   	but also
   	$$
   	\begin{cases} f(t)=f(1-t)$ for $t\in ({2^{-1}},1];\\
   	w=\begin{cases}
   	uf(u) & \text{if} \ \ u\in[0,{2^{-1}}]; \\
   	uf(u)+1-f(u)& \text{if}\ \ u\in ({2^{-1}},1].
   	\end{cases}
   	\end{cases}
   	$$
   	Then $$w\in \mathfrak{C}^{2}(\Omega\setminus K){\subseteq\mathfrak{C}^{1,1}(\Omega\setminus K)}.$$
   	A direct calculation indicates that not only
   	$$
   	\Delta w=\begin{cases}
   	\big(uf(u)\big)''|\nabla u |^2& \text{if}\ \ u\in[0,{2^{-1}}]; \\
   	\big((u-1)f(u)\big)''|\nabla u |^2 & \text{if}\ \ u\in ({2^{-1}},1],
   	\end{cases}
   	$$
   	but also
   	$$
   	\begin{aligned}
   	&\int_{\Omega\setminus K}|\Delta w|\, d\upsilon_{\mathsf{g}}\\
   	&\ \ = \int_0^1 \int_{\{u=t\}}\frac{|\Delta w|}{|\nabla u|}\,d\sigma_{\mathsf{g}}\,dt
   	\\
   	&\ \ =\int_0^{{2^{-1}}} |\big(tf(t)\big)''|\int_{\{u=t\}}|\nabla u|\,d\sigma_{\mathsf{g}}\,dt+\int_{{2^{-1}}}^1|\big((t-1)f(t)\big)''|\int_{\{u=t\}}|\nabla u|\,
   	d\sigma_{\mathsf{g}}\,dt\\
   	&\ \ ={\rm cap}_2(K,\Omega)\left(\int_0^{{2^{-1}}}|\big(tf(t)\big)''|dt+\int_{{2^{-1}}}^1 |\big((t-1)f(t)\big)''|\,dt\right)\\
   	&\ \ =2{\rm cap}_2(K,\Omega) \int_0^{{2^{-1}}} |\big(tf(t)\big)''|dt
   	\end{aligned}
   	$$
   	{Despite having
    $$\int_0^{{2^{-1}}} |\big(tf(t)\big)''|\, dt\geq \big(tf(t)\big)'\big|_0^{2^{-1}}=1,$$
    we are able to }construct a special function $f$ such that the last integral is nearly $1.$ If we set $$g(t)=tf(t),$$ then \eqref{43} is equivalent to
   	$$
   	\begin{cases} g\in C^2[0,{2^{-1}}];\\
   	g(0)=g'(0)=g''(0)=0;\\
   	g(\delta)=\delta;\\
   	g'(\delta)=1;\\
   	g''|_{[\delta,{2^{-1}}]}=0.
   	\end{cases}    	
   	$$
   	Given $\{M,m,j,k\},$ we select the continuous function $g''$ as
   	$$
   	g''(t)=\begin{cases}
   	M\sin(kt)& \text{if}\ \ t\in[0,\frac{\pi}{k}]; \\
   	-m\sin (jt-\frac{j\pi}{k})& \text{if}\ \ t\in[\frac{\pi}{k},\frac{\pi}{k}+\frac{\pi}{j}]; \\
   	0 &\text{if}\ \ t\in[\frac{\pi}{k}+\frac{\pi}{j},{2^{-1}}],
   	\end{cases}
   	$$
   	whence getting
   	$$ g'(t)=\int_0^t g''(s)ds=\begin{cases}
   	\frac{M}{k}\Big(1-\cos(kt)\Big)& \text{if}\ \ t\in[0,\frac{\pi}{k}]; \\
   	\frac{2M}{k}+\frac{m}{j}\Big(\cos (jt-\frac{j\pi}{k})-1\Big) & \text{if}\ \ t\in[\frac{\pi}{k},\frac{\pi}{k}+\frac{\pi}{j}]; \\
   	\frac{2M}{k}-\frac{2m}{j} & \text{if}\ \ t\in[\frac{\pi}{k}+\frac{\pi}{j},{2^{-1}}].
   	\end{cases}
   	$$
   	For any sufficiently small $\varepsilon>0,$ let
   	$$
   	\begin{cases}
   	j=\frac{1}{\varepsilon};\\
   	m=1;\\
   	k=\frac{1-2\varepsilon}{2\varepsilon^2};\\
   	M=({2^{-1}}+\varepsilon)k;\\ \delta=\frac{\pi}{k}+\frac{\pi}{j}\ll{2^{-1}}.
   	\end{cases}
   	$$
   	Then we deduce not only
   	$$
   	g'(\delta)=\frac{2M}{k}-\frac{2m}{j}=1,
   	$$
   	but also
   	$$\begin{aligned}
   	g(\delta)&=\int_0^{\frac{\pi}{k}} \frac{M}{k}\Big(1-\cos(kt)\Big) dt+\int_{\frac{\pi}{k}}^{\frac{\pi}{k}+\frac{\pi}{j}}\left(\frac{2M}{k}+\frac{m}{j}\Big(\cos (jt-\frac{j\pi}{k})-1\Big)\right)\,dt
   	\\ &=\frac{M\pi}{k^2}+\Big(\frac{2M}{k}-\frac{m}{j}\Big)\frac{\pi}{j}\\
   	&=\delta.
   	\end{aligned}
   	$$
   	In the end, we reach
   	$$\int_0^{{2^{-1}}} |g''(t)|dt= \frac{2M}{k}+\frac{2m}{j}=1+4\varepsilon,
   	$$
   	thereby obtaining
   	$$
   	{\rm cap}_{2,1}(K,\Omega)\leq\int_{\Omega\setminus K}|\Delta w|\, d\upsilon_{\mathsf{g}} \leq 2{\rm cap}_2(K,\Omega)(1+4\varepsilon),
   	$$
   	which, along with letting $\varepsilon\to 0$, gives the required inequality \eqref{42}.

   \item If $p\in (0,1)$, then letting not only $u$ be the $2-$capacity potential of $(K,\Omega)$
    	but also
    	$$\Big\{f,w,g, M, m, j,k, \varepsilon\Big\}$$ be defined as above, we employ the classic co-area formula based on the area element $d\sigma_{\mathsf{g}}$ of $(\mathbb M^n,\mathsf{g})$ to compute
    	$$\begin{aligned}
    	&\int_{\Omega\setminus K}|\Delta w|^p\, d\upsilon_{\mathsf{g}}\\
    	&\ \ = \int_0^1 \int_{\{u=t\}}\frac{|\Delta w|^p}{|\nabla u|}\,d\sigma_{\mathsf{g}}\,dt\\
    	&\ \ =\int_0^{{2^{-1}}} |\big(tf(t)\big)''|^p\int_{\{u=t\}}\frac{d\sigma_{\mathsf{g}}\,dt}{|\nabla u|^{1-2p}}+\int_{{2^{-1}}}^1|\big((t-1)f(t)\big)''|^p \int_{\{u=t\}}\frac{d\sigma_{\mathsf{g}}\,dt}{|\nabla u|^{1-2p}}\\
    	&\ \  \leq 2\left(\max\limits_{t\in [0,1]}\int_{\{u=t\}}|\nabla u|^{2p-1}\,d\sigma_{\mathsf{g}}\right)\int_0^{{2^{-1}}} |g''(t)|^p\, dt
    	\end{aligned}
    	$$
    	Notice that
    	$$u\ \ \text{is smooth and}\ \
    	\nabla u\neq 0\ \ \text{in}\ \ \overline{\Omega\setminus K}.
    	$$
    	Thus
    	$$\max\limits_{t\in [0,1]}\int_{\{u=t\}}|\nabla u|^{2p-1}\,d\sigma_{\mathsf{g}}<\infty
    	$$
    	while
    	$$\begin{aligned}
    	\int_0^{{2^{-1}}} |g''(t)|^p\, dt & =\int_0^{\frac{\pi}{k}}\Big(M\sin(kt)\Big)^p\,dt+\int_{\frac{\pi}{k}}^{\frac{\pi}{k}+\frac{\pi}{j}}\Big(m\sin(jt-\frac{j\pi}{k})\Big)^p\,dt
    	\\ & \leq \int_0^{\frac{\pi}{k}}\Big(Mkt)\Big)^p\,dt+ \int_0^{\frac{\pi}{j}}\Big(mjt)\Big)^p\,dt
    	\\ &=\frac{\pi^{p+1}}{p+1}\Bigg(\frac{M^p}{k}+\frac{m^p}{j}\Bigg)
    	\\ &=\frac{\pi^{p+1}}{p+1}\left(({2^{-1}}+\varepsilon)^p\bigg(\frac{1-2\varepsilon}{2\varepsilon^2}\bigg)^{p-1}+\varepsilon\right)\\
    	&\rightarrow 0\ \ \text{as}\ \  \varepsilon\rightarrow 0\ \ \text{since}\ \ p<1.
    	\end{aligned}
    	$$
    	Consequently, we get
    	$$
    	{\rm cap}_{2,p}(K,\Omega)=0.
    	$$
    \end{itemize}

   \end{proof}

     Theorem \ref{t22} \& its proof are fortunately extendable to the functional/variational capacity ${\rm cap}_p(\cdot,\cdot)$ on $(\mathbb M^n,\mathsf{g})$ whose Euclidean case can be found in either \cite{LiHo} for $p\in (1,\infty)$ or \cite{BP} for $p=2$.
    \begin{corollary}
    For any $p\in (1,\infty)$ and smooth condenser $(K,\Omega)$ in $(\mathbb M^n,\mathsf{g}),$ let
    $$
    \begin{cases} {\rm C}_{\Delta_p}(K,\Omega)=\inf\Big\{\int_{\Omega\setminus K}|\Delta_pf|\,d\upsilon_{\mathsf{g}}:\ f\in \mathfrak{C}^{1,1}(\Omega\setminus K)\Big\};\\
    \Delta_p f=\langle\nabla,|\nabla f|^{p-2}\nabla f\rangle=\text{div}\big(|\nabla f|^{p-2}\nabla f\big).
    \end{cases}
    $$
    Then
     \begin{equation}
     \label{eq42o}
       {\rm C}_{\Delta_p}(K,\Omega)=2{\rm cap}_p(K,\Omega).
      \end{equation}
    \end{corollary}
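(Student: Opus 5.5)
We mimic the proof of Theorem \ref{t22} (case $p=1$), replacing the harmonic $2$-capacity potential by the $p$-harmonic one. Let $u$ be the $p$-capacity potential of $(K,\Omega)$:
$$\Delta_p u=0\ \text{in}\ \Omega\setminus K,\qquad u|_{\partial K}=1,\qquad u|_{\partial\Omega}=0.$$
We use the standard facts that ${\rm cap}_p(K,\Omega)=\int_{\Omega\setminus K}|\nabla u|^p\,d\upsilon_{\mathsf{g}}$, and that the flux $\int_{\{u=t\}}|\nabla u|^{p-1}\,d\sigma_{\mathsf{g}}$ is independent of $t\in[0,1]$ and equals ${\rm cap}_p(K,\Omega)$. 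The latter follows by integrating $\Delta_pu=0$ over $\{t<u<1\}$ and using the co-area formula.

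\textbf{Lower bound.} Take any $v\in\mathfrak{C}^{1,1}(\Omega\setminus K)$ and any constant $c$. Integration by parts gives
$$\int_{\Omega\setminus K}(u-c)\,\Delta_p v\,d\upsilon_{\mathsf{g}}=-\int_{\Omega\setminus K}\langle\nabla u,|\nabla v|^{p-2}\nabla v\rangle\,d\upsilon_{\mathsf{g}},$$
with no boundary terms, because $\nabla v=0$ on $\partial(\Omega\setminus K)$. The direct analogue of Lemma \ref{l41} pairs $\Delta_p v$ against a test function instead of against $\Delta v$, but the right-hand side above is not linear in $v$. So I would not pursue that route. Instead I would argue via the co-area formula in $v$ itself:
$$\int|\Delta_p v|\ \ge\ \sup_{\|\phi\|_\infty\le1}\int\phi\,\Delta_pv.$$
Choose $\phi=\pm1$ according as $v<\tfrac12$ or $v>\tfrac12$, suitably mollified, that is, $\phi=\psi(v)$ with $\psi$ smooth, approximating $1-2\cdot 1_{\{v>1/2\}}$. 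Then
$$\int\psi(v)\,\Delta_p v=-\int\psi'(v)\,|\nabla v|^p,$$
and $-\psi'(v)$ concentrates near the level set $\{v=\tfrac12\}$ with total mass $2$. Taking the infimum is the delicate part. A cleaner choice is $\psi(s)=1-2s$, which gives
$$\int|\Delta_pv|\ \ge\ \frac{1}{\max|1-2v|}\cdot 2\int|\nabla v|^p.$$
This requires $0\le v\le 1$. That can be arranged by truncation in the infimum, or else we replace $\psi$ by $\max(-1,\min(1,1-2s))$ (after smoothing). With that done, $\int|\nabla v|^p\ge{\rm cap}_p(K,\Omega)$ holds because $v$ is an admissible competitor for \eqref{30}. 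This yields ${\rm C}_{\Delta_p}\ge2{\rm cap}_p$. For $v$ taking values outside $[0,1]$, the clipped $\psi$ gives $-\int\psi'(v)|\nabla v|^p=2\int_{\{0<v<1\}}|\nabla v|^p\ge 2{\rm cap}_p$, since the truncation of $v$ to $[0,1]$ is still admissible.

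\textbf{Upper bound.} Let $w=F(u)$, where $F(t)=g(t)$ on $[0,\tfrac12]$ and $F(t)=1-g(1-t)$ on $[\tfrac12,1]$, with $g$ the function built in the proof of Theorem \ref{t22}. Since $F'=0$ at $t=0,1$, we have $w\in\mathfrak{C}^{1,1}(\Omega\setminus K)$. Using $\Delta_pu=0$,
$$\Delta_p w=\operatorname{div}\big(|F'(u)|^{p-2}F'(u)|\nabla u|^{p-2}\nabla u\big)=\big(|F'|^{p-2}F'\big)'(u)\,|\nabla u|^p.$$
By co-area and flux constancy,
$$\int|\Delta_pw|=\int_0^1\big|(|F'|^{p-2}F')'(t)\big|\int_{\{u=t\}}|\nabla u|^{p-1}\,d\sigma_{\mathsf{g}}\,dt={\rm cap}_p(K,\Omega)\,{\rm TV}\big(|F'|^{p-2}F'\big).$$
Set $G=|F'|^{p-2}F'$. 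Then $G$ goes $0\to1\to0$ and is monotone on each half, so its total variation is $\ge 2$. By choosing $G$ itself to be a near-monotone ramp of height $1$ on $[0,\delta]$ and on $[1-\delta,1]$ (defining $F'=G^{1/(p-1)}$ and adjusting so that $\int_0^1F'=1$, the analogue of the $g(\delta)=\delta$ condition), the total variation is $2+O(\varepsilon)$. Letting $\varepsilon\to0$ gives ${\rm C}_{\Delta_p}\le2{\rm cap}_p$.

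\textbf{Main obstacle.} The main obstacle is regularity. The $p$-capacity potential is generally only $C^{1,\alpha}$ and may have critical points, so $w\in C^{1,1}$ and the co-area computation need justification. I would handle this by approximating (regularized $p$-Laplacian, or using that $\nabla u\neq0$ near smooth boundary by the Hopf lemma), and by checking that $(|F'|^{p-2}F')'$ is bounded, so that $\Delta_pw$ is a genuine $L^1$ function.
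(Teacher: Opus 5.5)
Your proposal is correct and follows the paper's proof. For the lower bound, you test $\Delta_p v$ against $1-2v$ (the paper uses $\tfrac12-f$) and integrate by parts with vanishing boundary terms. For the upper bound, you compose the $p$-capacity potential $u$ with the same profile $F$ from the proof of Theorem \ref{t22}, so that co-area plus flux constancy give ${\rm cap}_p(K,\Omega)\cdot{\rm TV}(|F'|^{p-2}F')\to 2\,{\rm cap}_p(K,\Omega)$.

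Your clipped test function $\max(-1,\min(1,1-2s))$ in fact repairs the paper's first inequality, which tacitly assumes $0\le f\le 1$. The regularity caveat you raise, that critical points of $u$ leave $w$ only $C^{1,\alpha}$, is one the paper does not address either.
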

    \begin{proof} Below is a two-fold argument.
    	\begin{itemize}
        \item On the one hand, for any $f\in \mathfrak{C}^{1,1}(\Omega\setminus K),$ we estimate
        $$
        \begin{aligned}
        &{2^{-1}}\int_{\Omega\setminus K}|\Delta_pf|\,d\upsilon_{\mathsf{g}}\\
        &\ \ \geq \int_{\Omega\setminus K}({2^{-1}}-f)\Delta_pf\,d\upsilon_{\mathsf{g}}
        \\
        &\ \ =\int_{\partial(\Omega\setminus K)} ({2^{-1}}-f)|\nabla f|^{p-2}\left(\frac{\partial f}{\partial\nu}\right)\, d\sigma_{\mathsf{g}}-\int_{\Omega\setminus K}|\nabla f|^{p-2}\big\langle\nabla({2^{-1}}-f),\nabla f\big\rangle\, d\upsilon_{\mathsf{g}}
        \\
        &\ \ =\int_{\Omega\setminus K}|\nabla f|^{p}\, d\upsilon_{\mathsf{g}}\\
        &\ \ \geq {\rm cap}_p(K,\Omega),
        \end{aligned}
        $$
        whence
        \begin{equation}
        \label{44}
        {\rm C}_{\Delta_p}(K,\Omega)\geq 2{\rm cap}_p(K,\Omega).
        \end{equation}

        \item On the other hand, let not only $u$ be the $p-$potential of $(K,\Omega)$ but also
        $$\bigg\{w,\ f,\ g(t)=tf(t)\bigg\}
        $$
        be defined as in the proof of Theorem \ref{t22}. Then
          $$
        \Delta_p w= \Big([g'(u)]^{p-1}\Big)'|\nabla u |^p\ \ \text{
        when $u\in [0,{2^{-1}}]$ since $g'>0.$}
    $$
    Then
        $$
         \begin{aligned}
      \int_{\Omega\setminus K}|\Delta_p w|\, d\upsilon_{\mathsf{g}} & = 2{\rm cap}_p(K,\Omega) \int_0^{{2^{-1}}}\Bigg| \Big(\big(g'(t)\big)^{p-1}\Big)'\Bigg|dt
      \\ &=  2{\rm cap}_p(K,\Omega)\cdot\Bigg(\big(g'(t)\big)^{p-1}\Big|_{0}^{\frac{\pi}{k}}-\big(g'(t)\big)^{p-1}\Big|_{\frac{\pi}{k}}^1\Bigg)
      \\ &= 2{\rm cap}_p(K,\Omega)\cdot\Bigg(2\Big(g'\big(\frac{\pi}{k}\big)\Big)^{p-1}-1\Bigg)
      \\ &=2{\rm cap}_p(K,\Omega)\cdot\Big(2(1+2\varepsilon)^{p-1}-1\Big)
      \end{aligned}
        $$
        By sending $\varepsilon\rightarrow 0$ in the above formula, we get
       \begin{equation}
       \label{eq42b}
       {\rm C}_{\Delta_p}(K,\Omega)\leq 2{\rm cap}_p(K,\Omega).
       \end{equation}
\end{itemize}
       Now a combination of \eqref{44}-\eqref{eq42b} derives the required identification \eqref{eq42o}.
    \end{proof}

\section{Capacities under conformal changes}\label{s5}

Naturally, a combination of \eqref{16} -- \eqref{19} leads to
    \begin{definition}\label{d51}
      For $p\in [1,\infty)$ and any smooth compact domain $K$ {in a given non-compact Riemannian manifold} $(\mathbb M^n,\mathsf{g})$, let
     $$
     \begin{cases}{\rm cap}_{2,p}(K;\mathsf{g})=\inf\Big\{\int_{K^c}|\Delta f|^p\,d\upsilon_{\mathsf{g}}:\ f\in\mathfrak{C}^{1,1}(K^c)\Big\};\\
\mathfrak{C}^{1,1}(K^c)=\Big\{f\in C^{1,1}(\overline{K^c}):\ f\big|_{\partial K}-1=0=f\big|_{\infty}\ \&\ |\nabla f|\big|_{\partial K}=0=|\nabla f|\big|_{\infty}\Big\}.
    \end{cases}
    $$
 {A function $$u\in\mathfrak{C}^{1,1}(K^c)+W^{2,p}_0(K^c)$$ is called the $(2,p)-$potential of $K$ if $u$ is a weak solution to
    \eqref{18} (where $\Omega=\mathbb M^n$) in the sense that
    $$
    \int_{K^c}|\Delta u|^{p-2} \Delta u\Delta v\,d\upsilon_{\mathsf{g}}=0\ \ \forall\ \ v\in W^{2,p}_0(K^c).
    $$}
    \end{definition}

     Note that if $p\in (1,\infty)$ then the $(2,p)-$potential is always unique (whenever it exists) for any smooth compact domain $K$ as the $p-$energy $\mathsf{E}_p(\cdot)$ is strict convex. However, the existence of $(2,p)-$potential may depend on the geometric property of $(\mathbb M^n,\mathsf{g})$ such as the volume growth rate and the asymptotical behavior of its Green function of $p-$biharmonic operator. Therefore we're not here to conduct an in-depth research except finding an unexpected conformal structure of \eqref{12}-\eqref{14} as described below.

    \begin{theorem}\label{t51} For any smooth compact subdomain $K$ of $\mathbb R^n$, let
    	$$
    	{\rm Conf}(K)=\left\{g=\begin{cases}
    	e^{2u}\mathsf{g}_0& \text{for}\ \ n=2
    	\\ u^{\frac{4}{n-2}}{\mathsf{g}_0}  & \text{for}\ \ n\geq 3\ \&\ u>0
    	\end{cases}\right\}.
    	$$
    	\begin{itemize}
   	\item [\rm (i)] If
   	$$
   	c_n= \begin{cases}
   	{2^{-1}} \ & \text{for}\ n=2;\\
   	\Big(\frac{n-2}{4(n-1)}\Big)^{\frac{2n}{n+2}} & \text{for}\ n\geq 3,
   	\end{cases}
   	$$
   	then
    \begin{equation}
    \label{52}
    {\rm cap}_{2,\frac{2n}{n+2}}(K;{\mathsf{g}_0})=\inf\limits_{g\in{\rm Conf}(K)\ \&\ u\in \mathfrak{C}^{1,1}(K^c)}\left(c_n\int_{K^c} |{\mathsf{R_g}}|^{\frac{2n}{n+2}}\, d\upsilon_{\mathsf{g}}\right).
    \end{equation}

   \item [\rm (ii)] If not only $\mathsf{g}\in {\rm Conf}(K)$ but also $(\mathbb R^n,\mathsf{g})$'s scalar curvature $\mathsf{R_{g}}=0$, then
    \begin{equation}
    \label{53}
    {\rm cap}_{2,\frac{2n}{n+2}}(K;\mathsf{g})={\rm cap}_{2,\frac{2n}{n+2}}(K;{\mathsf{g}_0})\ \ \text{as $u\to 1$ at infinity}.
    \end{equation}
    \end{itemize}
    \end{theorem}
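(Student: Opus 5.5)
For part (i), the plan is to read \eqref{52} as the curvature identity \eqref{13} specialised to the flat background $\mathsf{g}_0$, for which $\mathsf{R}_{\mathsf{g}_0}=0$. Let $n\ge3$, $p=\frac{2n}{n+2}$, and let $u\in\mathfrak{C}^{1,1}(K^c)$ with $u>0$ generate $g=u^{\frac4{n-2}}\mathsf{g}_0\in{\rm Conf}(K)$. The conformal scalar-curvature equation in \S\ref{s1} then gives
$$\Delta_{\mathsf{g}_0}u=-\Big(\frac{n-2}{4(n-1)}\Big)u^{\frac{n+2}{n-2}}\mathsf{R}_g .$$
Raising this to the power $p$ produces the factor $u^{\frac{n+2}{n-2}\cdot\frac{2n}{n+2}}=u^{\frac{2n}{n-2}}$, and this is exactly the volume density $d\upsilon_g=u^{\frac{2n}{n-2}}\,d\upsilon_{\mathsf{g}_0}$. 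Hence
$$\int_{K^c}|\Delta_{\mathsf{g}_0}u|^p\,d\upsilon_{\mathsf{g}_0}=c_n\int_{K^c}|\mathsf{R}_g|^p\,d\upsilon_g,$$
pointwise in the admissible $u$. For $n=2$ the computation is the same: $p=1$, $\Delta u=-\frac12e^{2u}\mathsf{R}_g$ and $d\upsilon_g=e^{2u}dx$, which gives $c_2=\frac12$, and no sign condition on $u$ is needed. Taking the infimum over $u$ then yields \eqref{52}, provided the two infima run over the same class of functions.

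That proviso is the main obstacle. For $n\ge3$ the right side of \eqref{52} only involves \emph{positive} $u\in\mathfrak{C}^{1,1}(K^c)$, whereas ${\rm cap}_{2,p}(K;\mathsf{g}_0)$ allows sign-changing competitors. The inequality "$\le$" is therefore immediate, and the real task is to show that restricting to positive competitors does not raise the infimum. My first attempt would start from an arbitrary admissible $f$ with finite energy and write it via the Newtonian potential, $f=-\Gamma*(\Delta f)$ with $\Gamma$ the fundamental solution. I would then try to show that replacing $f$ by a positive competitor costs at most $o(1)$ in $\int|\Delta\cdot|^p$. The natural candidate is $f_\varepsilon=f+\varepsilon h_\varepsilon$, where $h_\varepsilon$ is $|x|^{2-n}$ corrected by a cut-off near $\partial K$ so that the boundary data $h_\varepsilon=|\nabla h_\varepsilon|=0$ there are preserved. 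The harmonic part of $h_\varepsilon$ contributes nothing to the energy, and the cut-off contributes $O(\varepsilon^p)$.

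If this does not produce positivity wherever $f<0$, the fallback is to work with the minimiser itself. Following Theorem \ref{t21}(i), $|\Delta v|^{p-2}\Delta v$ is harmonic in $K^c$. I would combine this with the boundary conditions to argue, via a maximum-principle argument as in Lemma \ref{l31}, that the $(2,p)$-potential $v$ is positive. Positive functions of the form $v+\varepsilon h_\varepsilon$ then approximate the infimum.

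For part (ii), the hypothesis $\mathsf{R}_g=0$ with $g=u^{\frac4{n-2}}\mathsf{g}_0$ means $\Delta_{\mathsf{g}_0}u=0$. If $u$ is a positive harmonic function on all of $\mathbb R^n$, Liouville's theorem makes $u$ constant, and the normalisation $u\to1$ at infinity forces $u\equiv1$, so $g=\mathsf{g}_0$ and \eqref{53} is trivial. For $n=2$ the same reasoning applies to the harmonic function $u$, since $u\to0$ in the exponent corresponds to $e^{2u}\to1$. To cover the case where $u$ is only harmonic on $K^c$, I would also check that \eqref{14} supplies the structural reason behind \eqref{53}. With $p=\frac{2n}{n+2}$ it gives $\int|\Delta_gf|^p\,d\upsilon_g=\int|\Delta_{\mathsf{g}_0}(fu)|^p\,dx$. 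The map $f\mapsto fu$ preserves decay at infinity because $u\to1$, but it matches the data on $\partial K$ only when $u$ is $1$ to first order along $\partial K$. This boundary mismatch is the point where the global Liouville argument is genuinely needed.
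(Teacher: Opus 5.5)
Your treatment of (ii) has a genuine gap. Read literally, with $\mathsf{R_g}=0$ on all of $\mathbb R^n$, your Liouville argument is correct, but it makes (ii) vacuous. The paper's proof only uses $\Delta_{\mathsf g_0}u=0$ in $K^c$ (think of $u=1+\frac{\mathsf m}{2}|x|^{2-n}$ with $0$ in the interior of $K$). For such $u$, your conclusion that the boundary mismatch of $f\mapsto fu$ makes a global Liouville argument ``genuinely needed'' is wrong. The missing idea is to let the conformal covariance act on $1-f$, whose Cauchy data vanish on $\partial K$, rather than on $f$. For $v\in\mathfrak{C}^{1,1}(K^c)$ put $f=1-\frac{1-v}{u}$. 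Then $f=1$ and $\nabla f=0$ on $\partial K$ whatever $u|_{\partial K}$ is. At infinity, $u\to1$ and $|\nabla u|\to0$ (gradient estimate for the harmonic $u$) give $f\to0$ and $|\nabla f|\to0$. The inverse map is $v=1-(1-f)u$, so $v\mapsto f$ is a bijection of $\mathfrak{C}^{1,1}(K^c)$, a class which is the same for $\mathsf g$ and $\mathsf g_0$. Because $\Delta_{\mathsf g_0}u=0$, the transformation law gives
\[
\Delta_{\mathsf g}f=-u^{-\frac{n+2}{n-2}}\Delta_{\mathsf g_0}(1-v)=u^{-\frac{n+2}{n-2}}\Delta_{\mathsf g_0}v,
\quad\text{so}\quad
\int_{K^c}|\Delta_{\mathsf g}f|^{\frac{2n}{n+2}}\,d\upsilon_{\mathsf g}=\int_{K^c}|\Delta_{\mathsf g_0}v|^{\frac{2n}{n+2}}\,d\upsilon_{\mathsf g_0},
\]
and the two capacities coincide on taking infima. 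For $n=2$ no Liouville argument is needed at all, since $\int|\Delta_{\mathsf g}f|\,d\upsilon_{\mathsf g}=\int|\Delta_{\mathsf g_0}f|\,d\upsilon_{\mathsf g_0}$ for every conformal factor.

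For (i), your pointwise identity is exactly the paper's computation. The paper then simply takes the infimum, without remarking that ${\rm Conf}(K)$ forces $u>0$ when $n\ge3$. So your observation that the identity by itself only gives ``left side $\le$ right side'' is fair, and the paper does not supply the reverse inequality. Neither of your remedies supplies it either. Adding $\varepsilon h_\varepsilon$ with $\varepsilon$ small cannot lift a region where $f\le-\delta<0$. Once $\varepsilon$ is of order one, the cut-off layer near $\partial K$ contributes energy of order one, so there is no $o(1)$ control. The fallback needs two things. First, the existence of an exterior $(2,p)$-potential, which the paper explicitly leaves open after Definition~\ref{d51}. Second, its positivity, which is not a maximum-principle fact. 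In Lemma~\ref{l31} positivity comes from the explicit radial formula $t^{n-1}f'(t)=\int_r^t s^{n-1}g(s)\,ds<0$. By contrast, harmonicity of $|\Delta v|^{p-2}\Delta v$ carries no sign information, because no boundary values are prescribed for it; already the clamped biharmonic problem ($p=2$) is not positivity preserving in general domains. As it stands, the ``$\ge$'' half of (i) for $n\ge3$ remains unproved in your proposal, exactly as it is tacitly assumed in the paper.
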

    \begin{proof} (i) Two situations are handled in the sequel.
\begin{itemize}    	
    \item  If $n\geq 3,$ then for
     $$
     \begin{cases} u\in\mathfrak{C}^{1,1}(K^c);\\
     \mathsf{g}=u^{\frac{4}{n-2}}{\mathsf{g}_0},
     \end{cases}
     $$
     we utilize the transformation law of conformal deformations to calculate
      $$
        {\mathsf{R_g}}=u^{-\frac{n+2}{n-2}}\Bigg(\mathsf{R}_{{\mathsf{g}_0}}u-\frac{4(n-1)}{n-2}\Delta_{{\mathsf{g}_0}} u\Bigg).
      $$
      Since $R_{{\mathsf{g}_0}}=0$, we derive
      $$
        \int_{K^c}|\Delta_{{\mathsf{g}_0}} u|^p\,d\upsilon_{\mathsf{g}_0}= \int_{K^c}\Bigg|\frac{n-2}{4(n-1)}{\mathsf{R_g}} u^{\frac{n+2}{n-2}}\Bigg|^p\cdot u^{\frac{-2n}{n-2}}\,d\upsilon_{\mathsf{g}},
      $$
thereby getting \eqref{52} under $n\ge 3$ via choosing $p= \frac{2n}{n+2}.$

      \item If $n=2,$ then letting $g= e^{2u}\mathsf{g}_0$ derives not only
      $${\mathsf{R_g}}=e^{-2u}({\mathsf R}_{{\mathsf{g}_0}}-2\Delta_{\mathsf{g}_0} u)
      $$
      but also
       $$
        \int_{K^c}|\Delta_{\mathsf{g}_0} u|\,d\upsilon_{{\mathsf{g}_0}}= \int_{K^c}\Big|{2^{-1}} e^{2u} {\mathsf{R_g}}\Big|e^{-2u}\,d\upsilon_{\mathsf{g}}=
        {2^{-1}} \int_{K^c}|{\mathsf{R_g}}|\,d\upsilon_{\mathsf{g}},
      $$
      whence establishing \eqref{52} under $n=2$.
      \end{itemize}
      (ii) Also, two situations are dealt with in the sequel.
      \begin{itemize}
      	\item
      If $n\ge 3$, then we check the desired formula \eqref{53} according to three steps.

      Firstly, the set $\mathfrak{C}^{1,1}(K^c)$ stays unchanged under the conformal deformations as
        $$
        |\nabla f|=0\Longleftrightarrow|\nabla_{{\mathsf{g}_0}} f|=0.
        $$

        Secondly, we know that
        $$\Delta_{{\mathsf{g}_0}}u=0\ \ \text{in}\ \ K^c\ \ \text{thanks to}\ \  {\mathsf{R_g}}=0,
        $$
        so that a gradient estimation gives
        $$
        |\nabla u|\big|_\infty=0.
        $$
        As an application, there holds
        $$\begin{aligned}
        v\in \mathfrak{C}^{1,1}(K^c)& \Longleftrightarrow \begin{cases} (1-v)\big|_{\partial K}=0;\\
        (1-v)\big|_{\infty}=1;\\
        |\nabla_g(1-v)|\big|_{\partial K}=0=|\nabla_g(1-v)|\big|_{\infty}.
        \end{cases}\\
        &
        \Longleftrightarrow\begin{cases} \frac{1-v}{u}\big|_{\partial K}=0;\\
        \frac{1-v}{u}\big|_{\infty}=1;\\
        \big|\nabla\big(\frac{1-v}{u}\big)\big|\bigg|_{\partial K}=0=\big|\nabla \big(\frac{1-v}{u}\big)\big|\bigg|_{\infty}.
        \end{cases}\\
        &\Longleftrightarrow 1-\frac{1-v}{u}\in \mathfrak{C}^{1,1}(K^c).
        \end{aligned}
        $$

        Thirdly, we utilize
        $$
             \Delta \Bigg(1-\frac{1-v}{u}\Bigg)=u^{-\frac{n+2}{n-2}}\Delta_{\mathsf{g}_0} (1-v)
        $$
        to calculate
        $$
        \int_{K^c} \left|\Delta \Bigg(1-\frac{1-v}{u}\Bigg)\right|^{\frac{2n}{n+2}}\,d\upsilon_{\mathsf{g}}= \int_{K^c} |\Delta_{{\mathsf{g}_0}}v|^{\frac{2n}{n+2}}\,d\upsilon_{{\mathsf{g}_0}},
        $$
        which, via taking the infimum over $v\in \mathfrak{C}^{1,1}(K^c)$, yields the desired formula \eqref{53} for $n\ge 3$.

        \item If $n=2$, then for any smooth function $v$ on $\Omega\setminus K\subseteq \mathbb R^2,$ we have
        $$
        \int_{\Omega\setminus K}|\Delta v|\, d\upsilon_{\mathsf{g}}=\int_{\Omega\setminus K}|\Delta_{\mathsf{g_0}}v|\, d\upsilon_{\mathsf{g}_0},
        $$
        thereby getting
        $${\rm cap}_{2,1}(K,\Omega;\mathsf{g})={\rm cap}_{2,1}(K,\Omega;\mathsf{g}_0).$$
        In other words, the identification
        $${\rm cap}_{2,1}(K,\Omega)=2{\rm cap}_{2}(K,\Omega)$$ is always conformally invariant.
        Needless to say, the desired formula \eqref{53} for $n=2$ is a special case of the just-verified invariance.

        \end{itemize}
        \end{proof}

    \section{ADM-type masses of asympotically flat manifolds}\label{s6}

    In accordance with Schoen's paper \cite{S}, given an integer $n\ge 3$, an $n$-dimensional complete Riemannian manifold
    $(\mathbb M^n,\mathsf{g})$ is called asymptotically flat (AF) provided that there are not only a compact subset $K$ of $\mathbb M^n$ but also a diffeomorphism $$
    \Phi:\ \mathbb M^n\setminus K\to\{x\in\mathbb R^n:\ |x|>1\}=\mathbb R^n\setminus \bar{B}(1)
    $$
    such that
    $$
    \begin{cases}
    {\mathsf g}_{ij}(x)=\delta_{ij}+{\it O}(|x|^{-\alpha});\\
    |x|{\mathsf g}_{ij,k}(x)+|x|^2|g_{ij,kl}(x)={\it O}(|x|^{-\alpha});\\
    |\mathsf{R_g}(x)|={\it O}(|x|^{-\beta});\\
    \text {for a parameter pair}\ \{\alpha,\beta\}\ \text{with}\ \min\big\{2(1+\alpha), \beta\big\}>n,
    \end{cases}
    $$
    where $\mathsf{R_g}$ still represents the scalar curvature of $(\mathbb M^n,\mathsf{g})$.

    Schoen-Yau's \cite[Theorem 5.3]{SY} (cf. \cite{SY1, SY0}) reveals that if $n\le 7$ or $(\mathbb{M}^{n},{\mathsf{g}})$ admits a Witten's spinor $\psi$ as described in \cite{Wi}, then the scalar curvature requirement
    $$0\le \mathsf{R}_g\in L^1(\mathbb M^n)$$ yields that the $\mathrm{ADM}$ mass (cf. Arnowitt-Deser-Misner's work \cite{ADM}) of $(\mathbb{M}^{n},{\mathsf{g}})$ is
    \begin{equation}
    \label{61}
    \mathsf{M}_{\mathrm{ADM}}(\mathbb{M}^{n},{\mathsf{g}})=\big({2(n-1)\sigma_{n-1}}\big)^{-1}\underset{r\to\infty}{\lim}\int_{\mathsf{S}_r}\sum_{j=1}^n\sum_{i=1}^n{({\mathsf{g}}_{ij,i}-{\mathsf{g}}_{ii,j})\nu_j}\,d\mathsf{S}\ge 0,
    \end{equation}
    where not only $\mathsf{S}_r$ is the coordinate sphere of radius $r$, but also $\nu=(\nu_1,...,\nu_n)$ is the outward unit normal to $\mathsf{S}_r$, as well as $d\mathsf{S}$ is the area element of $\mathsf{S}_r$ in the coordinate chart, and hence
    \begin{equation}
    \label{62}
    \mathsf{M}_{\mathrm{ADM}}(\mathbb{M}^{n},{\mathsf{g}})=0\Longleftrightarrow (\mathbb{M}^{n},{\mathsf{g}})\cong(\mathbb R^n, \mathsf{g}_0).
    \end{equation}

    Obviously, \eqref{61}-\eqref{62} can be well-understood via \cite[Example 2.3]{HL} as shown below.
    \begin{example}\label{e}
    	The typical AF manifold is the Schwarzchild manifold:
    	$$
    	\begin{cases}
    	(\mathbb M^n_{\mathrm S},{\mathsf g}_{\mathrm S})=\bigg(\mathbb R^n\setminus \bar{B}\big({(\frac{\mathsf{m}}{2})^\frac1{n-2}}\big),\ \Big(1+2^{-1}{\mathsf{m}}{|\cdot|^{2-n}}\Big)^\frac{4}{n-2}\mathsf{g}_0(\cdot)\bigg);\\
    	\bar{B}\big({(\frac{\mathsf{m}}{2})^\frac1{n-2}}\big)=\Big\{x\in\mathbb R^n:\ |x|\le (\frac{\mathsf{m}}{2})^\frac1{n-2}\Big\};\\
    			\begin{tikzpicture}[scale=0.466, blue]
    			\draw[line width=0.6, dashed] (0,0) ellipse [x radius=4.99cm,y radius=0.4cm];
    			\node at (0,1) {\ $\big\{|x|=\frac{\mathsf{m}}{2}\big\}$ };
    			\draw[line width=0.6] (5,0)..controls (5,-3) and (6,-5)..(9,-6);
    			\draw[line width=0.6] (-5,0)..controls (-5,-3) and (-6,-5)..(-9,-6);
    			\node at (0,-6) {$|x|\rightarrow\infty$\ \rm{in}\ $\mathbb M^3_{\mathrm S}$};
    			\end{tikzpicture}
    			
    	\end{cases}
    	$$
    	with:
    	\begin{itemize}
    		
    		\item its ADM mass being
    		\begin{equation*}
    		\mathsf{M}_{\mathrm{ADM}}(\mathbb M^n_{\mathrm S},{\mathsf g}_{\mathrm S})=\mathsf{m};
    		\end{equation*}
    		
    		\item its exterior domain outside the horizon being represented as the graph of the spherically symmetric function
    		
    		\begin{equation*}
    		\phi(x)=\begin{cases}
    		\sqrt{8{\mathsf{m}}(|x|-2\mathsf{m})}\ &\ \text{for}\ n=3;\\
    		\sqrt{2\mathsf{m}}\ln\Bigg(\frac{|x|}{\sqrt{2\mathsf{m}}}+\sqrt{\frac{|x|^2}{{2\mathsf{m}}}-1}\Bigg)\ &\ \text{for}\ n=4;\\
    		c_{n,{\mathsf{m}}}+{\it O}(|x|^{2-\frac{n}{2}})\ &\ \text{for}\ n\ge 5;
    		\end{cases}
    		\end{equation*}
    		where $c_{n,{\mathsf{m}}}$ is a constant depending on the pair $\{n,\mathsf{m}\}$.
    	\end{itemize}
    	
    \end{example}

    Moreover, Witten's spinor $\psi$ (cf. \cite[(2.3)]{BO}) produces the following ADM-mass formula
    \begin{equation}
    \label{63}
    \mathsf{M}_{\mathrm{ADM}}(\mathbb{M}^{n},{\mathsf{g}})=\big(2(n-1)\sigma_{n-1}\big)^{-1}\int_{\mathbb M^n}\big(|\nabla(2\psi)|^2+\mathsf{R}_g |\psi|^2\big)\,d\upsilon_{\mathsf{g}}.
    \end{equation}
    According to the argument for \cite[Theorem 2.17]{BO}, there is a strictly positive function $w$ such that not only
    $$
    4\Delta w=\mathsf{R}_g w\ \ \text{with}\ \
    w\to 1\ \ \text{at}\ \ \infty,
    $$
    but also
    \begin{align}
    \label{64}
    \int_{\mathbb M^n}(|\nabla(2w)|^2+\mathsf{R}_g w^2)\,d\upsilon_{\mathsf{g}}&=\underset{{v-1\in C^\infty_0(\mathbb M^n)}}{\inf}\int_{\mathbb M^n}\Big(|\nabla(2v)|^2+\mathsf{R}_g v^2\Big)\,d\upsilon_{\mathsf{g}}\\
    &\le\int_{\mathbb M^n}(|\nabla(2\psi)|^2+\mathsf{R}_g |\psi|^2)\,d\upsilon_{\mathsf{g}}\notag\\
    &=\big(2(n-1)\sigma_{n-1}\big)\mathsf{M}_{\mathrm{ADM}}(\mathbb{M}^{n},{\mathsf{g}})\ \ \text{due to \eqref{63}}.\notag
    \end{align}
  Thus, we are induced by \eqref{64} to wonder whether there is a natural formula for the ADM mass of any $3\le n$-dimensinal AF manifold which does not depend on any spin structure. For this concern, a careful look at \eqref{64}, along with \cite[Theorem 3.3]{Xasma}\&\eqref{11} plus \cite{BM, Es, FS, H, HPR, Sc, Xi3}, suggests such an intrinsic concept of the ADM-type masses of an arbitrary AF manifold that
    \begin{equation}
    \label{65}
    \begin{cases}
   \mathsf{M}_{\mathrm{ADM}_p,\dagger}(\mathbb M^n,\mathsf{g})=\Big({{2(n-1)\sigma_{n-1}}}\Big)^{-1}{\underset{u-1\in C^\infty_0(\mathbb M^n)}{\inf}
    	\int_{\mathbb M^n}\Big(|\nabla(2u)|^p+\mathsf{R}_g |u|^p\Big)\,d\upsilon_{\mathsf{g}}};\\
    \mathsf{M}_{\mathrm{ADM}_p,\ddagger}(\mathbb M^n,\mathsf{g})=
    \Big({{2(n-1)\sigma_{n-1}}}\Big)^{-1}
    {\underset{u-1\in C^\infty_0(\mathbb M^n)}{\inf}
    	\int_{\mathbb M^n}\Big(|\Delta(2u)|^p+\mathsf{R}_g |u|^p\Big)\,d\upsilon_{\mathsf{g}}},
    \end{cases}
    \end{equation}
    where $C^\infty_0(\mathbb M^n)$'s closure
    under the norm
    $$
    \left(\int_{{\mathbb M^n}}\Big(|u|^p+|\nabla u|^p\Big)\,d\upsilon_{\mathsf{g}}\right)^\frac1p\ \ \text{or}\ \
    \left(\int_{{\mathbb M^n}}\Big(|u|^p+|\nabla u|^p+|\nabla^2 u|^p\Big)\,d\upsilon_{\mathsf{g}}\right)^\frac1p
    $$
    is just the Sobolev $[1,\infty)\ni p$-space $$W^{1,p}_{0}(\mathbb M^n)\ \ \text{or}\ \ W^{2,p}_{0}(\mathbb M^n),
    $$
    where $\nabla^k u$ denotes the vector of all partial derivatives of $u$ of order $k\in \{1,2\}$ with respect to $\mathsf{g}$.

     Below is the newly discovered nonnegative $\mathrm{ADM}_p$-mass principle whose \eqref{69} is essentially tied to \eqref{52}-\eqref{53}.

    \begin{theorem}
    	\label{th31} For $p\in (1,\infty)$ let $(\mathbb M^n,\mathsf{g})$ be a $3\le n$-dimensional AF manifold satisfying
    	\begin{equation}
    	\label{66}
    	\begin{cases}
    	\mathsf{R}_g\ge 0;\\
    	 \int_{{\mathbb M^n}}\mathsf{R}_g\,d\upsilon_{\mathsf{g}}<\infty.
    	 \end{cases}
    	\end{equation}
    	If there is a positive constant pair $\{\lambda_\dagger,\lambda_\ddagger\}$ such that
    		\begin{equation}
    			\label{67}
    			\begin{cases}
    		\int_{\mathbb M^n}\Big(|\nabla(2u)|^p+\mathsf{R}_g |u|^p\Big)\,d\upsilon_{\mathsf{g}}\ge \lambda_\dagger\int_{\mathbb M^n}|u{-1}|^p\,d\upsilon_{\mathsf{g}}\ \ \forall\ \ u{-1}\in C_0^\infty(\mathbb M^n);\\
    		\int_{\mathbb M^n}\Big(|\Delta(2u)|^p+\mathsf{R_g} |u|^p\Big)\,d\upsilon_{\mathsf{g}}\ge \lambda_\ddagger\int_{\mathbb M^n}|u{-1}|^p\,d\upsilon_{\mathsf{g}}\ \ \forall\ \ u{-1}\in C_0^\infty(\mathbb M^n),
    		\end{cases}
    		\end{equation}
    		then
    		\eqref{65} has a minimizer pair $\{u_\dagger,u_\ddagger\}$ with
    		$$
    		\begin{cases}
    		{u_\dagger}\in W^{1,p}_{0}(\mathbb M^n)\cap C^{1,\alpha_\dagger}(\mathbb M^n)\ \ \text{with some}\ \  \alpha_\dagger\in (0,1);\\
    		{u_\ddagger}\in W^{2,p}_{0}(\mathbb M^n)\cap C^{2,\alpha_\ddagger}(\mathbb M^n)\ \ \text{with some}\ \  \alpha_\ddagger\in (0,1),
    		\end{cases}
    		$$ such that not only
    		\begin{equation}
    		\label{68}
    		\begin{cases}
    		\begin{cases}
    		2^p\Delta_pu_\dagger=\mathsf{R_g}{|u_\dagger|^{p-2}u_\dagger}\ \ \text{a.e. on}\ \ \mathbb M^n;\\
    		\underset{x\to\infty}{\lim}{u_\dagger}(x)=1,
    		\end{cases}\\
    		\begin{cases}
    		2^p\Delta\Big(|\Delta {u_\ddagger}|^{p-2}\Delta {u_\ddagger}\Big)=\mathsf{R_g}|{u_\ddagger}|^{p-2}u_\ddagger\ \ \text{a.e. on}\ \ \mathbb M^n;\\
    		\underset{x\to\infty}{\lim}{u_\ddagger}(x)=1,
    		\end{cases}
    		\end{cases}
    		\end{equation}
    		but also
    		\begin{equation}\label{69}
    		\begin{cases}
    		\mathsf{M}_{\mathrm{ADM}_p,\dagger}(\mathbb M^n,\mathsf{g})=\big({2(n-1)\sigma_{n-1}}\big)^{-1}{\int_{{\mathbb M^n}}\mathsf{R_g} |u_\dagger|^{p-2}u_\dagger\,d\upsilon_{\mathsf{g}}}=\frac{\underset{r\to\infty}{\lim}\int_{\mathsf{S}_r}|\nabla {u_\dagger}|^{p-2}\langle\nabla {u_\dagger},\nu\rangle\,d\mathsf{S}}{2^{1-p}(n-1)\sigma_{n-1}};\\
    		
 \mathsf{M}_{\mathrm{ADM}_p,\ddagger}(\mathbb M^n,\mathsf{g})=\big({2(n-1)\sigma_{n-1}}\big)^{-1}{\int_{{\mathbb M^n}}\mathsf{R_g} |u_\ddagger|^{p-2}u_\ddagger\,d\upsilon_{\mathsf{g}}}.
    		\end{cases}
    		\end{equation}
    		Consequently, there holds
    		\begin{equation}\label{610}
    		\begin{cases}	
    		
    		\mathsf{M}_{\mathrm{ADM}_p,\dagger}(\mathbb M^n,\mathsf{g})=0\Longleftrightarrow \mathsf{R_g}=\frac{2^p\Delta_p u_\dagger}{|u_\dagger|^{p-2}u_\dagger}=0\ \ \text{a.e. on}\ \ \mathbb M^n;\\
\mathsf{M}_{\mathrm{ADM}_p,\ddagger}(\mathbb M^n,\mathsf{g})=0\Longleftrightarrow \mathsf{R_g}=
    		\frac{2^p\Delta\big(|\Delta {u_\ddagger}|^{p-2}\Delta {u_\ddagger}\big)}{|{u_\ddagger}|^{p-2}u_\ddagger}=0\ \ \text{a.e. on}\ \ \mathbb M^n.
    		\end{cases}
    		\end{equation}
    		
\end{theorem}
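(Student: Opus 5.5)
The plan is the direct method of the calculus of variations, run on the affine class $\{u:\ u-1\in W^{1,p}_0(\mathbb M^n)\}$ (resp. $W^{2,p}_0(\mathbb M^n)$). We write $u=1+\phi$ and treat the $\dagger$ and $\ddagger$ cases in parallel.

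\textbf{Existence.} Take a minimizing sequence $u_k=1+\phi_k$ with $\phi_k\in C_0^\infty(\mathbb M^n)$. The functionals in \eqref{65} are bounded along the sequence, since the infimum is at most the value at a fixed cutoff of $1$, which is finite by \eqref{66}. Combined with \eqref{67}, this gives uniform bounds on $\|\phi_k\|_{L^p}$. We also get bounds on $\|\nabla\phi_k\|_{L^p}$, resp. $\|\Delta\phi_k\|_{L^p}$, because $\mathsf R_{\mathsf g}\ge0$ makes both terms nonnegative. In the $\ddagger$ case we upgrade $\|\Delta\phi_k\|_{L^p}+\|\phi_k\|_{L^p}$ to a full $W^{2,p}$ bound. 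This uses the Calder\'on--Zygmund estimate, which holds uniformly on an AF manifold (bounded geometry: $\mathsf g_{ij}\to\delta_{ij}$ with decaying derivatives, plus compactness of $K$).

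Reflexivity for $1<p<\infty$ gives a weakly convergent subsequence $\phi_k\rightharpoonup\phi$. Lower semicontinuity follows from convexity of $|\cdot|^p$ for the gradient/Laplacian term. For the term $\int\mathsf R_{\mathsf g}|u|^p$ we use Fatou together with a.e. convergence, obtained from Rellich on compact sets and a diagonal argument. Hence $u_\dagger=1+\phi$ (resp. $u_\ddagger$) is a minimizer.

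\textbf{Euler--Lagrange equation and regularity.} Varying $u\mapsto u+t\eta$ with $\eta\in C^\infty_0$ gives the weak forms of \eqref{68}. The factor $2^p$ arises from $|\nabla(2u)|^p=2^p|\nabla u|^p$, and the signs are fixed by the convention $\Delta_p=\mathrm{div}(|\nabla\cdot|^{p-2}\nabla\cdot)$. The condition $u\to1$ at infinity comes from $\phi\in W^{k,p}_0$ plus local regularity and decay.

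For regularity: in the $\dagger$ case apply the $C^{1,\alpha}$ theory for $p$-Laplace equations with bounded right-hand side, using that $\mathsf R_{\mathsf g}$ is smooth. In the $\ddagger$ case set $w=|\Delta u|^{p-2}\Delta u$. Then $\Delta w=2^{-p}\mathsf R_{\mathsf g}|u|^{p-2}u\in L^\infty_{loc}$, so $w\in C^{1,\alpha}$ by elliptic theory, and then $\Delta u=|w|^{p'-2}w$ is H\"older continuous, giving $u\in C^{2,\alpha}$ by Schauder. I expect this step, and in particular the uniform $W^{2,p}$ coercivity and decay at infinity in the $\ddagger$ case, to be the main obstacle. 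I would handle it by working in the AF chart and comparing with the Euclidean estimates, with the error terms controlled by the $O(|x|^{-\alpha})$ decay.

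\textbf{Mass identities.} Test the weak equation with $\eta=u-1=\phi$, approximating by $C^\infty_0$ functions. In the $\dagger$ case this gives
\[
2^p\!\int|\nabla u_\dagger|^p+\int\mathsf R_{\mathsf g}|u_\dagger|^{p-2}u_\dagger(u_\dagger-1)=0 .
\]
Hence the minimal energy equals $\int\mathsf R_{\mathsf g}|u_\dagger|^{p-2}u_\dagger$, which is the first equality in \eqref{69}. The $\ddagger$ case is the same with $|\Delta u|^p$ in place of $|\nabla u|^p$.

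For the boundary form, integrate $2^p\Delta_pu_\dagger=\mathsf R_{\mathsf g}|u_\dagger|^{p-2}u_\dagger$ over the coordinate ball $\{|x|<r\}$ and apply the divergence theorem. This yields the flux $2^p\int_{\mathsf S_r}|\nabla u_\dagger|^{p-2}\langle\nabla u_\dagger,\nu\rangle$, and letting $r\to\infty$ is justified by $\mathsf R_{\mathsf g}\in L^1$ and $u_\dagger$ bounded. Dividing by $2(n-1)\sigma_{n-1}$ produces the stated constant $2^{1-p}(n-1)\sigma_{n-1}$.

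\textbf{The equivalences \eqref{610}.} The mass is zero iff $\int\mathsf R_{\mathsf g}|u|^{p-2}u=0$, which by the energy identity is a sum of nonnegative terms. So both $\nabla u$ (resp. $\Delta u$) and $\mathsf R_{\mathsf g}|u|^{p-2}u$ vanish. Then $u\equiv1$ (using $u\to1$ and the injectivity of $\Delta$ on decaying functions), so $\mathsf R_{\mathsf g}=0$ a.e. Conversely, if $\mathsf R_{\mathsf g}=0$, then $u\equiv1$ is admissible in the limit and the mass is $0$; the equation \eqref{68} gives the quotient form in \eqref{610}.
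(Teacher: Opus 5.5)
Your overall plan matches the paper's: the direct method on $1+W^{k,p}_0(\mathbb M^n)$, the first variation, elliptic regularity, then an energy identity and a flux computation on coordinate spheres. Where you depart from it, you are more careful. You get a.e.\ convergence from Rellich on compact sets plus a diagonal argument; the global strong $L^p$ convergence the paper invokes is not available on a non-compact manifold. You also obtain the first identity in \eqref{69} by testing the weak equation with $u-1\in W^{k,p}_0(\mathbb M^n)$, rather than by integrating over coordinate balls and using $u\to1$ on $\mathsf S_r$.

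The genuine error is your claim that the first variation ``gives the weak forms of \eqref{68}'' with signs ``fixed by the convention''. That holds for $\dagger$ but fails for $\ddagger$. With $w=|\Delta u|^{p-2}\Delta u$ and $\eta\in C_0^\infty(\mathbb M^n)$,
\[
0=\frac1p\,\frac{d}{dt}\Big|_{t=0}\int_{\mathbb M^n}\Big(|\Delta(2u+2t\eta)|^p+\mathsf R_g|u+t\eta|^p\Big)\,d\upsilon_{\mathsf g}=\int_{\mathbb M^n}\Big(2^p\,w\,\Delta\eta+\mathsf R_g|u|^{p-2}u\,\eta\Big)\,d\upsilon_{\mathsf g}.
\]
Moving $\Delta$ from $\eta$ to $w$ takes two integrations by parts, hence no sign change. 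So the minimizer satisfies $2^p\Delta(|\Delta u_\ddagger|^{p-2}\Delta u_\ddagger)=-\mathsf R_g|u_\ddagger|^{p-2}u_\ddagger$. In the $\dagger$ case a single integration by parts supplies the minus sign that makes \eqref{68} correct. In the $\ddagger$ case no convention can rescue the sign, since $v\mapsto\Delta(|\Delta v|^{p-2}\Delta v)$ is unchanged under $\Delta\mapsto-\Delta$.

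Consequently the $\ddagger$ equation in \eqref{68} cannot be proved as stated, nor can the expression of $\mathsf R_g$ as $2^p\Delta(|\Delta u_\ddagger|^{p-2}\Delta u_\ddagger)/(|u_\ddagger|^{p-2}u_\ddagger)$ in \eqref{610}. If both signs held, then $\mathsf R_g|u_\ddagger|^{p-2}u_\ddagger=0$, and your energy identity would force $\Delta u_\ddagger=0$, $u_\ddagger\equiv1$ and $\mathsf R_g=0$ a.e. So the stated equation fails whenever $\mathsf R_g\not\equiv0$. The paper's proof has the same oversight, since it dismisses the $\ddagger$ case as ``completely similar''.

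What survives is exactly what your testing argument uses. The weak identity with $\eta=u_\ddagger-1$ gives $2^p\int|\Delta u_\ddagger|^p+\int\mathsf R_g|u_\ddagger|^p=\int\mathsf R_g|u_\ddagger|^{p-2}u_\ddagger$. Hence the $\ddagger$ line of \eqref{69} is correct, and so is the equivalence $\mathsf M_{\mathrm{ADM}_p,\ddagger}(\mathbb M^n,\mathsf g)=0\iff\mathsf R_g=0$ a.e. You should state the $\ddagger$ Euler--Lagrange equation with the minus sign; your regularity bootstrap through $w$ is unaffected by it.
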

    \begin{proof} It suffices to validate the assertion for $\nabla u_\dagger$ since the argument for $\Delta u_\ddagger$ is completely similar.
    	
    	Firstly, note that \eqref{66} guarantees
    	\begin{equation}
    	\label{611}
    	0\le \mathsf{M}_{\mathrm{ADM}_p,\dagger}(\mathbb{M}^{n},{\mathsf{g}})\le
    	\int_{\mathbb M^n}\mathsf{R_g}\,d\upsilon_{\mathsf{g}}<\infty,
    	\end{equation}
    	So, not only keeping \eqref{611}\&\eqref{67}'s $\dagger$-inequality in mind but also following \cite[p.86]{He}, we may assume that
    	$$
    	\text{$u_j-1\in C^\infty_0(\mathbb M^n)$ is a minimizing sequence for $\mathsf{M}_{\mathrm{ADM}_p,\dagger}(\mathbb M^n,\mathsf{g})$}.
    	$$
    	Then
    	$$
    	\mathsf{M}_{\mathrm{ADM}_p,\dagger}(\mathbb M^n,\mathsf{g})=\lim_{j\to\infty}\frac{\int_{\mathbb M^n}(|\nabla(2u_j)|^p+\mathsf{R_g} |u_j|^{p})\,d\upsilon_{\mathsf{g}}}{{2(n-1)\sigma_{n-1}}}.
    	$$
    	Note that $u_j{-1}$ is bounded in $W^{1,p}_0(\mathbb M^n)$. So we use not only the fact that $W^{1,p}_0(\mathbb M^n)$ is reflexive under $p\in (1,\infty)$ but also the well-known Rellich-Kondrakov theorem to get a function
    	$$
    	u_\dagger-1\in W^{1,p}_0(\mathbb M^n)
    	$$
    	such that
    	$$
    	\begin{cases}
    	u_j\rightharpoonup {u_\dagger}\ \ &\text{in}\ \ W^{1,p}_0(\mathbb M^n);\\
    	\|u_j-{u_\dagger}\|_{p}=\left(\int_{\mathbb M^n}|u_j-{u_\dagger}|^p\,d\upsilon_{\mathsf{g}}\right)^\frac1p\to 0;\\
    	u_j\to {u_\dagger}\ \ &\text{a.e. on}\ \ \mathbb M^n;\\
    	\underset{x\to\infty}{\lim}u_\dagger(x)=1\ \ (\text{verifying \eqref{68}'s second formula}).
    	\end{cases}
    	$$
    	Moreover, the above weak convergence derives
    	\begin{align*}
    	\mathsf{M}_{\mathrm{ADM}_p,\dagger}(\mathbb M^n,\mathsf{g})&\le\frac{ \int_{\mathbb M^n}\Big(|\nabla(2{u_\dagger})|^p+\mathsf{R_g} |{u_\dagger}|^p\Big)\,d\upsilon_{\mathsf{g}}}{2(n-1)\sigma_{n-1}}\\
    	&\le\frac{\underset{j\to\infty}{\liminf}\int_{\mathbb M^n}\Big(|\nabla(2u_j)|^p+\mathsf{R_g} |u_j|^p\Big)\,d\upsilon_{\mathsf{g}}}{2(n-1)\sigma_{n-1}}\\
    	&\le\mathsf{M}_{\mathrm{ADM}_p,\dagger}(\mathbb M^n,\mathsf{g}).
    	\end{align*}
    	Thus, we have
    	\begin{equation}
    	\label{612}
    	\mathsf{M}_{\mathrm{ADM}_p,\dagger}(\mathbb M^n,\mathsf{g})=\big({2(n-1)\sigma_{n-1}}\big)^{-1}
    	{\int_{\mathbb M^n}\Big(|\nabla(2{u_\dagger})|^p+\mathsf{R_g} |u_\dagger|^p\Big)\,d\upsilon_{\mathsf{g}}}.
    	\end{equation}
    	The formula \eqref{612} in turn implies that ${u_\dagger}$ is a critical point of the energy functional
    	$$
    	W_0^{1,p}(\mathbb M^n)\ni v-1\mapsto	\mathsf{E}(v)=\int_{\mathbb M^n}\Big(|\nabla (2 v)|^p+\mathsf{R_g}|v|^p\Big)\,d\upsilon_{\mathsf{g}},
    	$$
    	whence ${u_\dagger}$ enjoys
    	\begin{align*}
    	\partial_t \mathsf{E}\Big({u_\dagger}+t\phi\Big)\bigg|_{t=0}
    	&=\int_{\mathbb M^n}\Big(2^p|\nabla {u_\dagger}|^{p-2}\langle\nabla {u_\dagger},\nabla\phi\rangle+\mathsf{R_g}{|u_\dagger|^{p-2}u_\dagger}\phi\Big)\,d\upsilon_{\mathsf{g}}\\
    	&= 0\ \ \forall\ \ \phi\in C_0^\infty(\mathbb M^n).
    	\end{align*}
    	Furthermore, an integration-by-parts produces that if $\phi\in C_0^\infty(\mathbb M^n)$ then
    	$$
    	\int_{\mathbb M^n}|\nabla {u_\dagger}|^{p-2}\langle\nabla {u_\dagger},\nabla\phi\rangle\,d\upsilon_{\mathsf{g}}
    	=-\int_{\mathbb M^n}\phi\langle\nabla,|\nabla {u_\dagger}{|}^{p-2}\nabla {u_\dagger}\rangle\,d\upsilon_{\mathsf{g}},
    	$$
    	whence
    	$$
    	\int_{\mathbb M^n}\Big(\mathsf{R_g}{|u_\dagger|^{p-2}u_\dagger}-2^p\Delta_pu_\dagger\Big)\phi\,d\upsilon_{\mathsf{g}}=0.
    	$$
    	This validates the desired $\dagger$-equation in \eqref{68} which ensures $u_\dagger\in C^{1,\alpha_\dagger}(\mathbb M^n)$ for some $\alpha_\dagger\in (0,1)$ via a standard regularity theory for the elliptic equations.
    	
    	Secondly, we utilize not only the first equation of \eqref{68} but also the integration-by-parts as well as ${u_\dagger}\to 1$ at $\infty$ to conclude that if $\mathbb B_r$ is sufficiently large coordinate ball in $\mathbb M^n$ with its boundary $\mathbb S_r$ then a combination of \eqref{68}'s first $\dagger$-condition and the integration-by-parts derives
    	\begin{align*}
    	&\int_{\mathbb M^n}\Big(|\nabla(2{u_\dagger})|^p+\mathsf{R_g} |u_\dagger|^p\Big)\,d\upsilon_{\mathsf{g}}\\
    	&\ \ = \int_{\mathbb M^n}\Big(|\nabla(2{u_\dagger})|^p + 2^p u_\dagger\Delta_pu_\dagger\Big)\,d\upsilon_{\mathsf{g}}\\
    	&\ \ =2^p\underset{r\to\infty}{\lim}\bigg(\int_{\mathbb M^n\setminus\mathbb{B}_r}+\int_{\mathbb B_r}\bigg)\Big(|\nabla {u_\dagger}|^p+{u_\dagger}\Delta_pu_\dagger\Big)\,d\upsilon_{\mathsf{g}}\\
    	&\ \ =2^p\underset{r\to\infty}{\lim}\int_{\mathbb B_r}\Big(|\nabla {u_\dagger}|^{p-2}\langle\nabla {u_\dagger},\nabla {u_\dagger}\rangle+{u_\dagger}\Delta_pu_\dagger\Big)\,d\upsilon_{\mathsf{g}}\\ &\ \ =2^p\underset{r\to\infty}{\lim}\int_{\mathsf{S}_r}{u_\dagger}|\nabla {u_\dagger}|^{p-2}\langle\nabla {u_\dagger},\nu\rangle\,d\mathsf{S}\\
    	&\ \ =2^p\underset{r\to\infty}{\lim}\int_{\mathsf{S}_r}|\nabla {u_\dagger}|^{p-2}\langle\nabla {u_\dagger},\nu\rangle\,d\mathsf{S}\ \ \text{due to}\ \ \underset{x\to\infty}{\lim}u_\dagger(x)=1,
    	\end{align*}
    	thereby yielding the desired $\dagger$-formula in \eqref{69} via not only the divergence theorem but also \eqref{68}'s first $\dagger$-equation as well as \eqref{612}.
    	
    	Thirdly, \eqref{610}'s first $\dagger$-equivalence follows immediately from  resolving
    	\begin{align*}
    	0&=\mathsf{M}_{\mathrm{ADM}_p,\dagger}(\mathbb M^n,\mathsf{g})\\
    	&={\int_{\mathbb M^n}\Bigg(\frac{|\nabla u_\dagger|^p+\mathsf{R_g} |2^{-1}u_\dagger|^p}{2^{1-p}(n-1)\sigma_{n-1}}\Bigg)\,d\upsilon_{\mathsf{g}}}\\
    	&=\int_{{\mathbb M^n}}\left(\frac{\mathsf{R_g} |u_\dagger|^{p-2}u_\dagger}{2(n-1)\sigma_{n-1}}\right)\,d\upsilon_{\mathsf{g}}\\
    	&={\underset{r\to\infty}{\lim}\int_{\mathsf{S}_r}\Bigg(\frac{|\nabla {u_\dagger}|^{p-1}\Big\langle\frac{\nabla {u_\dagger}}{|\nabla u_\dagger|},\nu\Big\rangle}{2^{1-p}(n-1)\sigma_{n-1}}\Bigg)\,d\mathsf{S}}.
    	\end{align*}
    	
    \end{proof}	

    \begin{remark}\label{r31} Clearly, the elementary formula
    	$$
    	\underset{p\to1}{\lim}(a^p+b^p)=a+b\ \ \forall\ \ (a,b)\in[0,\infty)^2,
    	$$
    	along with Theorem \ref{th31}, implies
    		$$
    		\begin{cases}
    		\underset{p\to 1}{\lim}\mathsf{M}_{\mathrm{ADM}_p,\dagger}(\mathbb M^n,\mathsf{g})={\underset{u-1\in C^\infty_0(\mathbb M^n)}{\inf}\int_{\mathbb M^n}\Big(\frac{|\nabla(2u)|+\mathsf{R_g} |u|}{2(n-1)\sigma_{n-1}}\Big)\,d\upsilon_{\mathsf{g}}};\\
    	\underset{p\to 1}{\lim}\mathsf{M}_{\mathrm{ADM}_p,\ddagger}(\mathbb M^n,\mathsf{g})={\underset{u-1\in C^\infty_0(\mathbb M^n)}{\inf}\int_{\mathbb M^n}\Big(\frac{|\Delta(2u)|+\mathsf{R_g} |u|}{2(n-1)\sigma_{n-1}}\Big)\,d\upsilon_{\mathsf{g}}}.
    	\end{cases}
    	$$
    	Interestingly, in accordance with \cite[p.1354]{BBCO}, we can naturally view
    	$$
    	\mathsf{M}_{\mathrm{ADM}_{2,\dagger}}(\mathbb R^3,\mathsf{g}_0)
    	$$
    	as the scattering length of the scalar curvature
    	$$
    	\mathsf{R_g}\ \ \text{of the Euclidean $3$-space $(\mathbb R^3,\mathsf{g}_0)$}.
    	$$	
    \end{remark}

\end{document}